\newtheorem{theorem}{Theorem}[section]
\theoremstyle{definition}
\theoremstyle{remark}
\newtheorem{remark}[theorem]{Remark}
\newtheorem{corollary}[theorem]{Corollary}
\newtheorem{proposition}[theorem]{Proposition}
\numberwithin{equation}{section}
\begin{document}
\begin{frontmatter}

\title{A Modified SIS Epidemic Model with Application to\\ Health Insurance Pricing}

\author[VIASM2]{Tuan Chau Do}
\ead{tuanb2610@gmail.com} 
\address[VIASM2]{Faculty of Natural Sciences, Budapest University of Technology and Economics
1111 Budapest, Muegyetem rkp. 3, Hungary}
\author[VIASM1]{Tien Thinh Le}
\ead{letienthinh2510@gmail.com} 
\address[VIASM1]{Faculty of Mathematics and Statistics, University of Economics Ho Chi Minh City,\\ 279 Nguyen Tri Phuong Campus, Ward 8, District 10, Ho Chi Minh City, Vietnam}
\author[HUS]{Nguyen Trong Hieu}
\ead{hieunguyentrong@gmail.com}
\address[HUS]{University of Science, Vietnam National University - Hanoi, 334 Nguyen Trai Street,\\ Thanh Xuan Trung Ward, Thanh Xuan District, Hanoi, Vietnam}
\author[FPT]{Manh Tuan Hoang\corref{Corr}}
\cortext[Corr]{Corresponding author}
\ead{tuanhm16@fe.edu.vn} 
\address[FPT]{Department of Mathematics, FPT University, Hoa Lac Hi-Tech Park, Km29 Thang Long Blvd, Hanoi, Viet Nam}


\begin{abstract}
In this work, we investigate a modified version of the classical SIS model that incorporates hospitalization for treatment and disease-induced mortality, aiming to more accurately capture the dynamics relevant to health insurance pricing models. More precisely, we introduce a new framework, referred to as the SISHD model, which considers both hospitalized individuals and disease-induced mortality. Dynamical properties of the proposed model are thoroughly analyzed, including positivity and boundedness of the solutions, the basic reproduction number, the existence and asymptotic stability of equilibrium points. Furthermore, we utilize the proposed model in the context of health insurance pricing, where the population sizes estimated from the SISHD model are used to determine appropriate insurance costs. Finally, numerical simulations are conducted to illustrate and validate the theoretical results.
\end{abstract}

\begin{keyword}
Mathematical epidemiology \sep SIS epidemic model \sep Dynamical system \sep Stability analysis\sep Health insurance pricing

\textit{2020 Mathematics Subject Classification:} 34C60, 37N99 
\end{keyword}




\end{frontmatter}



\section{Introduction}\label{Sec1}
Mathematical epidemiology has a long-standing and well-established history of development and continues to be an important field with numerous practical applications (see, e.g., \cite{Brauer1, Brauer2, Martcheva} and references therein). The well-known SIR model, first proposed by Kermack and McKendrick \cite{Kermack}, represents one of the earliest frameworks in epidemic modeling, and it is widely used to introduce basic principles of epidemiological modeling. In practice, mathematical modeling and analysis of infectious diseases have become essential for understanding transmission dynamics, forecasting outbreaks, and informing strategies for disease control and prevention. In recent years, applications of mathematical epidemiology models in insurance pricing have attracted considerable attention from many researchers; as a result, several findings have proven to be highly useful not only in theory but also in practice (see, for instance, \cite{Chernov, Feng, Feng1, Francis, Nkeki, Nkeki1, Zhai, Zinihi} and references therein).

In this work, we revisit a classical SIS epidemic model \cite{Allen, Brauer1, Brauer2, Martcheva}, which describes infectious diseases in which individuals recover but do not acquire immunity after recovery. The model under consideration of the form:
\begin{equation}\label{eq:1}
\begin{split}
&S^{\prime}(t) = \Lambda - \beta I(t)S(t) + \alpha_I I(t) - \mu S(t),\\
&I^{\prime}(t) = \beta I(t)S(t) - (\alpha_I + \gamma_I + \mu)I(t)
\end{split}
\end{equation}
subject to initial data: $S(0) \geq 0$ and $I(0) \geq 0$.

In this model:
\begin{itemize}
\item the entire population is divided into two disjoint groups: the \textit{susceptible class (S)}, consisting of individuals who are not yet infected but are at risk of contracting the disease, and the \textit{infected class (I)}, consisting of individuals who are infected and capable of transmitting the disease to others;
\item $S(t)$ and $I(t)$ stand for the number of susceptible individuals and infected individuals at time $t$, respectively;
\item $\beta$ is the transmission rate;
\item $\alpha_I$ is the recovery rate;
\item $\mu$ is the natural death rate;
\item $\gamma_I$ is the disease-induced death in the infectious class $(I)$;
\item $\Lambda$ denotes the recruitment rate (births).
\end{itemize}
It is important to note that all the parameters are assumed to be positive due to epidemiological reasons.

The transmission diagram of \eqref{eq:1} is given in Figure \ref{Fig:Diagram1}. More details of the derivation and dynamical behaviour of the model \eqref{eq:1} can be found in \cite{Allen, Brauer1, Brauer2, Martcheva} (see also \cite{Gray, Lan} and references therein). SIS models, including \eqref{eq:1}, are recognized as fundamental frameworks that form the basis of many important epidemiological models and have a wide range of practical applications.
\begin{figure}[H]
\centering
\includegraphics[height=5.0cm,width=12cm]{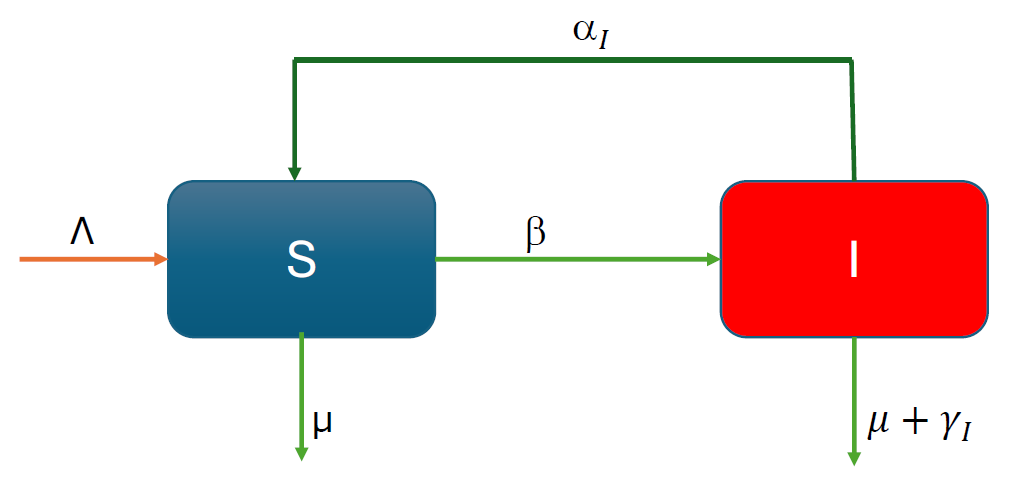}
\caption{Transmission diagram for the model \eqref{eq:1}.}\label{Fig:Diagram1}
\end{figure}

In this study, we consider a modified version of \eqref{eq:1} and study its applications in actuarial disease insurance. For this purpose, we first modify \eqref{eq:1} to make it more suitable for reality. To this end, we incorporate the following new classes into the model:
\begin{itemize}
\item the class $H$ consists of individuals who are hospitalized for treatment and may still transmit the disease to others;
\item the class $D$ consists of individuals who have died from the disease.
\end{itemize}

Then, an extended version of \eqref{eq:1} is given by:
\begin{equation}\label{eq:2}
\begin{split}
&S' = \Lambda - \beta IS - \beta \epsilon HS + \alpha_I I - \mu S + \alpha_H H,\\
&I' = \beta I S + \beta \epsilon HS - (\alpha_I + \gamma_I + \mu + \delta)I,\\
&H' = \delta I - (\gamma_H + \mu + \alpha_H)H,\\
&D' = \gamma_I I + \gamma_H H,
\end{split}
\end{equation}
where
\begin{itemize}
\item $\delta$ is the hospitalization rate from $I$ to $H$;
\item $\epsilon \in [0, 1]$ is the relative infectiousness of hospitalized individuals ($H$) compared with infectious individuals ($I$)
\item $\alpha_H$ is the discharge (recovery) rate from $H$ to $S$;
\item $\gamma_H$ is the disease-induced death rate for hospitalized individuals $H$.
\end{itemize}
The transmission between the classes $S, I, H$ and $D$ in \eqref{eq:2} is described in Figure \ref{Fig:Diagram2}. 
\begin{figure}[H]
\centering
\includegraphics[height=5.5cm,width=12cm]{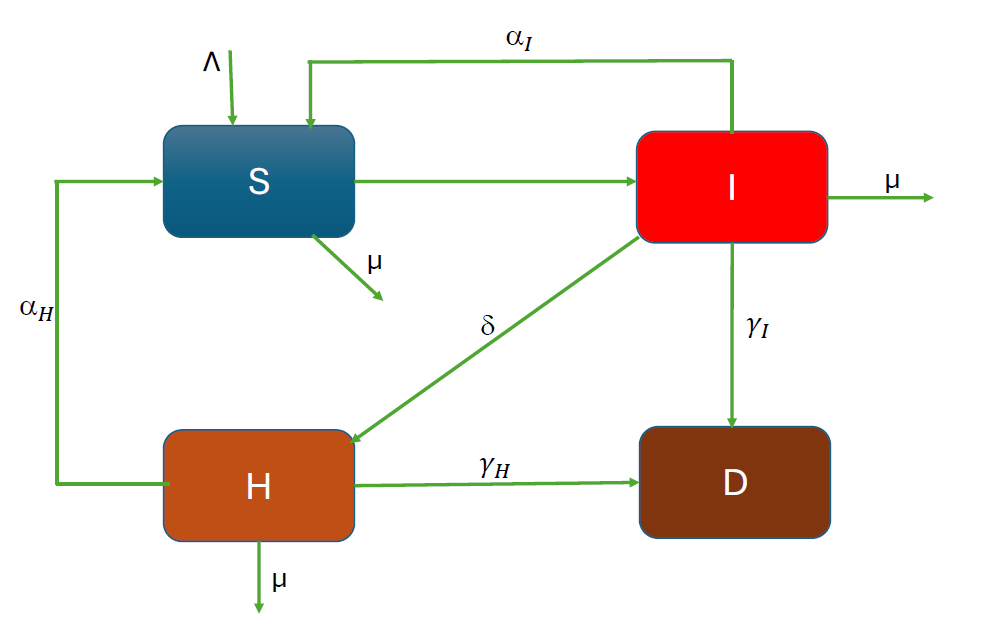}
\caption{Transmission diagram for the model \eqref{eq:2}.}\label{Fig:Diagram2}
\end{figure}
After performing a qualitative analysis of model \eqref{eq:2}, we consider its application to health insurance pricing, in which the number of individuals estimated from \eqref{eq:2} is used to calculate insurance premiums. The results obtained suggest several potential and practical applications in real-world settings.

The outline of this article is as follows:\\
In Section \ref{Sec2}, we establish dynamical properties of the model \eqref{eq:2}, including positivity and boundedness of solutions, the basic reproduction number, equilibrium points and their asymptotic stability. Section \ref{Sec3} provides an application of \eqref{eq:2} in actuarial disease insurance. Section \ref{Sec4} reports  numerical simulations to support and illustrative theoretical findings. The last sections provides some including remarks and open problems.
\section{Dynamical analysis of SISHD model}\label{Sec2}
In this section, we analyze dynamical properties of the model \eqref{eq:2}. We denote by $N(t)$ the total living individuals in \eqref{eq:2}, that is $N(t) = S(t) + I(t) + H(t)$ for $t \geq 0$.
\begin{proposition}\label{Proposition1}
The model \eqref{eq:2} admits the set $\mathbb{R}_+^4 = \big\{(S, I, H, D)^T \in \mathbb{R}^4|S, I, H, D \geq 0\big\}$ as a positively invariant set, that is $\big(S(t),\,I(t),\,H(t),\,D(t)\big)^T \in \mathbb{R}_+^4$ for $t > 0$ whenever $\big(S(0),\,I(0),\,H(0),\,D(0)\big)^T \in \mathbb{R}_+^4$. Also, we have the following estimate for $N(t)$
\begin{equation}\label{eq:3}
\limsup_{t \to \infty}N(t) \leq \dfrac{\Lambda}{\mu}.
\end{equation}
\end{proposition}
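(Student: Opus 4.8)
The plan is to treat the two assertions separately: positive invariance via a boundary-tangency argument, and the asymptotic bound by summing the equations and applying a differential inequality.

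For positive invariance, I would invoke the standard criterion that $\mathbb{R}_+^4$ is positively invariant provided the vector field points inward (or is tangent) on each coordinate face, i.e.\ whenever one state variable vanishes and the others are nonnegative, the corresponding derivative is nonnegative. Checking the four faces directly: if $S=0$ then $S' = \Lambda + \alpha_I I + \alpha_H H \ge 0$; if $I=0$ then $I' = \beta\epsilon H S \ge 0$; if $H=0$ then $H' = \delta I \ge 0$; and if $D=0$ then $D' = \gamma_I I + \gamma_H H \ge 0$, each using $\Lambda>0$ and the nonnegativity of the remaining coordinates. An equivalent and fully rigorous route is to rewrite the $S$-equation with an integrating factor, setting $\phi = \beta I + \beta\epsilon H + \mu$, as
\[
S(t) = S(0)\,e^{-\int_0^t \phi(s)\,ds} + \int_0^t e^{-\int_\tau^t \phi(s)\,ds}\big(\Lambda + \alpha_I I(\tau) + \alpha_H H(\tau)\big)\,d\tau,
\]
which exhibits $S$ as a nonnegative expression as long as $I,H$ remain nonnegative; analogous representations for $I$, $H$, $D$ together with a first-exit-time/continuity argument (at the first instant any coordinate could reach zero its derivative is $\ge 0$, so it cannot cross) then closes the claim.

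For the bound on $N = S+I+H$, I would add the first three equations. The crucial simplification is that all bilinear incidence terms and all exchange terms cancel in pairs: $-\beta IS + \beta IS = 0$, $-\beta\epsilon HS + \beta\epsilon HS = 0$, the $\pm\alpha_I I$ terms cancel, the $\pm\alpha_H H$ terms cancel, and the $\pm\delta I$ terms cancel. What survives is
\[
N'(t) = \Lambda - \mu N(t) - \gamma_I I(t) - \gamma_H H(t).
\]
Since $\gamma_I,\gamma_H > 0$ and $I,H \ge 0$ on the invariant region, this yields the differential inequality $N'(t) \le \Lambda - \mu N(t)$. Comparison with the scalar linear equation $u' = \Lambda - \mu u$ (equivalently, multiplying by the integrating factor $e^{\mu t}$ and integrating) gives $N(t) \le \big(N(0) - \tfrac{\Lambda}{\mu}\big)e^{-\mu t} + \tfrac{\Lambda}{\mu}$, and letting $t\to\infty$ produces the stated estimate $\limsup_{t\to\infty} N(t) \le \Lambda/\mu$.

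I expect no serious analytic obstacle; the work is essentially bookkeeping, the only genuinely clever point being the observation that summing the first three equations annihilates all nonlinearities. The step that most deserves care is the invariance argument: the naive tangency check guarantees trajectories cannot exit through the interior of a face, but to rule out escape through lower-dimensional edges or corners and to justify nonnegativity for \emph{all} $t>0$, one should either cite a Nagumo-type invariance theorem or carry out the integrating-factor/first-exit-time argument explicitly. Global existence for $t>0$ then follows since the a priori bound on $N$ rules out finite-time blow-up of $S,I,H$, while $D$ is nondecreasing and controlled by $\int_0^t (\gamma_I I + \gamma_H H)\,ds$.
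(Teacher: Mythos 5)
Your proposal is correct and follows essentially the same route as the paper: the tangency check on each coordinate face (with a Nagumo-type invariance theorem, which the paper cites as Proposition B.7 of Smith--Waltman) for positive invariance, and summing the first three equations to get $N' = \Lambda - \mu N - \gamma_I I - \gamma_H H \leq \Lambda - \mu N$ followed by the standard comparison theorem for the asymptotic bound. Your additional remarks on the integrating-factor representation and global existence are sound refinements but not needed beyond what the paper does.
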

\begin{proof}
Let $\big(S(0),\,I(0),\,H(0),\,D(0)\big)^T$ be any initial data belonging to $\mathbb{R}_+^4$. It follows from \eqref{eq:2} that
\begin{equation*}
\begin{split}
&S'|_{S = 0} = \Lambda + \alpha_I I + \alpha_H H \geq 0\\
&I'|_{I = 0} = \beta \epsilon HS \geq 0\\
&H'|_{H = 0} = \delta I \geq 0,\\
&D'|_{D = 0} = \gamma_I I + \gamma_H H \geq 0
\end{split}
\end{equation*}
for all $S, I, H, D \geq 0$. Thus, we deduce from \cite[Proposition B.7]{Smith} that $S(t), I(t), H(t), D(t) \geq 0$ for $t \geq 0$. This means that $\mathbb{R}_+^4 $ is a positively invariant set of \eqref{eq:2}.

It is easy to verify that the total living individuals satisfies
\begin{equation*}
N' = (S + I + H)' = \Lambda - \mu(S + I + H) - \gamma_I I - \gamma_H H \leq \Lambda - \mu(S + I + H) = \Lambda - \mu N.
\end{equation*}
Using the basic comparison theorem for ODEs \cite{McNabb} yields
\begin{equation*}
N(t) \leq \bigg(N(0) - \dfrac{\Lambda}{\mu}\bigg)e^{-\mu t} + \dfrac{\Lambda}{\mu},
\end{equation*}
which implies that the estimate \eqref{eq:3}.
Thus, the proof is complete.
\end{proof}

Since the $D$ component only appears in the fourth equation, it is sufficient to consider the first three equations:
\begin{equation}\label{eq:1new}
\begin{split}
&S' = \Lambda - \beta IS - \beta \epsilon HS + \alpha_I I - \mu S + \alpha_H H,\\
&I' = \beta I S + \beta \epsilon HS - (\alpha_I + \gamma_I + \mu + \delta)I,\\
&H' = \delta I - (\gamma_H + \mu + \alpha_H)H
\end{split}
\end{equation}
on its feasible region defined by
\begin{equation}\label{eq:5}
\Omega = \bigg\{(S, I, H) \in \mathbb{R}_+^3|S + I + H \leq \dfrac{\Lambda}{\mu}\bigg\}.
\end{equation}
We now determine the set of equilibrium points and the basic reproduction number of \eqref{eq:1new}.
\begin{theorem}\label{Theorem1}
(i) The model \eqref{eq:1new} always possesses a disease-free equilibrium (DFE) point $E_0 = (S_0,\, I_0,\, H_0) = \big(\frac{\Lambda}{\mu},\,0,\,0\big)$ for all values of the parameters.\\
(ii) The basic reproduction number of \eqref{eq:1new} is computed by
\begin{equation}\label{eq:6}
\mathcal{R}_0 = \dfrac{
\beta \Lambda \left[ (\gamma_H + \mu + \alpha_H) + \epsilon \delta \right]
}{
\mu (\alpha_I + \gamma_I + \mu + \delta)(\gamma_H + \mu + \alpha_H)
}.
\end{equation}
(ii) A unique disease-endemic equilibrium (DEE) point $E_* = (S_*, I_*, H_*)$ exists if and only if $\mathcal{R}_0 > 1$. Moreover, if $E_*$ exists, it is given by
\begin{equation}\label{eq:7}
\begin{split}
S^* &= \dfrac{(\alpha_I + \gamma_I + \mu + \delta)(\gamma_H + \mu + \alpha_H)}
{\beta\,[\,\gamma_H + \mu + \alpha_H + \epsilon\,\delta\,]}, \\
I^* &=  \dfrac{(\gamma_H + \mu + \alpha_H)\,
\big(\Lambda - \mu\,S^*\big)}
{(\gamma_I + \mu + \delta)(\gamma_H + \mu + \alpha_H) - \alpha_H\,\delta},\\
H^* &= \dfrac{\delta\,I^*}{\gamma_H + \mu + \alpha_H}.
\end{split}
\end{equation}
\end{theorem}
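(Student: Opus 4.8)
The plan is to handle the three parts in turn, using direct substitution for (i), the next-generation matrix method for (ii), and elementary algebra together with a sign analysis for (iii). For part (i), I would set $I=H=0$ on the right-hand side of \eqref{eq:1new}: the second and third equations vanish identically, while the first collapses to $\Lambda-\mu S=0$, forcing $S=\Lambda/\mu$. This confirms that $E_0=(\Lambda/\mu,0,0)$ is an equilibrium for all parameter values, and the same computation shows it is the \emph{only} equilibrium with $I=0$.

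For part (ii), I would apply the next-generation operator approach, taking $I$ and $H$ as the infected compartments and writing their dynamics as $x'=\mathcal{F}-\mathcal{V}$, where $\mathcal{F}=(\beta IS+\beta\epsilon HS,\,0)^T$ collects the genuinely new infections while $\mathcal{V}$ collects the remaining transfer terms, including the hospitalization flux $\delta I$, which is a transition \emph{between} infected classes rather than a new infection. Linearizing at $E_0$ yields
\begin{equation*}
F=\begin{pmatrix}\beta\Lambda/\mu & \beta\epsilon\Lambda/\mu\\ 0 & 0\end{pmatrix},\qquad
V=\begin{pmatrix}\alpha_I+\gamma_I+\mu+\delta & 0\\ -\delta & \gamma_H+\mu+\alpha_H\end{pmatrix}.
\end{equation*}
Since $V$ is lower triangular its inverse is immediate, and $FV^{-1}$ is upper triangular with a zero second row, so its spectral radius equals the $(1,1)$ entry, which I expect to simplify exactly to \eqref{eq:6}. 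The only delicate point here is the correct placement of the $\delta I$ term in $\mathcal{V}$ rather than in $\mathcal{F}$.

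For part (iii), I would solve the equilibrium system under the assumption $I\neq0$. The third equation gives $H^*=\delta I^*/(\gamma_H+\mu+\alpha_H)$ directly. Substituting this into the second equation and dividing by $I^*\neq0$ leaves a single linear relation determining $S^*$, reproducing the stated formula; crucially, this identifies $S^*=\Lambda/(\mu\,\mathcal{R}_0)$, equivalently $\mathcal{R}_0=S_0/S^*$. To obtain $I^*$, I would add the three equilibrium equations, which cancels the transmission terms and gives $\mu N^*=\Lambda-\gamma_I I^*-\gamma_H H^*$; inserting the expression for $H^*$ makes this linear in $I^*$ and rearranges to the formula in \eqref{eq:7}, whose denominator is manifestly a sum of positive quantities. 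Because that denominator is positive, the sign of $I^*$ equals the sign of $\Lambda-\mu S^*$, and the identity $S^*=\Lambda/(\mu\,\mathcal{R}_0)$ gives $\Lambda-\mu S^*>0\iff\mathcal{R}_0>1$. This simultaneously yields existence, positivity of all three coordinates, and uniqueness (each coordinate is forced), while the sum relation $\mu N^*=\Lambda-\gamma_I I^*-\gamma_H H^*\le\Lambda$ also shows $E_*\in\Omega$.

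The main obstacle is pure bookkeeping rather than conceptual: I must verify that the compact denominator $(\gamma_I+\mu+\delta)(\gamma_H+\mu+\alpha_H)-\alpha_H\delta$ appearing in \eqref{eq:7} coincides with the denominator produced by the summation argument, $(\gamma_I+\mu)(\gamma_H+\mu+\alpha_H)+\delta(\gamma_H+\mu)$. This follows by expanding $\delta(\gamma_H+\mu+\alpha_H)$ and cancelling the $\alpha_H\delta$ term, and along the way confirms the positivity of the denominator that the existence criterion relies on.
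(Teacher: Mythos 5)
Your proposal is correct and, in substance, follows the same path as the paper: direct substitution for the DFE, the van den Driessche--Watmough next-generation construction for $\mathcal{R}_0$ (the paper writes a reordered $3\times 3$ system, but the spectral radius it computes is exactly that of your $2\times 2$ matrix $FV^{-1}$, with the same placement of the hospitalization flux $\delta I$ in $\mathcal{V}$ rather than $\mathcal{F}$), and elementary elimination for the endemic equilibrium. The one place you genuinely deviate is the computation of $I^*$: the paper substitutes $S^*$ and $H^*=\delta I^*/(\gamma_H+\mu+\alpha_H)$ into the \emph{first} equilibrium equation (using the second equation to replace the transmission terms), whereas you \emph{add all three} equilibrium equations so that the nonlinear transmission terms cancel identically, giving $\mu S^*+(\gamma_I+\mu)I^*+(\gamma_H+\mu)H^*=\Lambda$. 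The two routes yield the same formula --- your bookkeeping identity $(\gamma_I+\mu+\delta)(\gamma_H+\mu+\alpha_H)-\alpha_H\delta=(\gamma_I+\mu)(\gamma_H+\mu+\alpha_H)+\delta(\gamma_H+\mu)$ is exactly right --- but yours has the small advantage of exhibiting the denominator of $I^*$ as a manifestly positive sum, a fact the paper's ``it is easily seen that $I_*>0$ if and only if $\mu S_*<\Lambda$'' silently relies on. Likewise, your explicit identification $S^*=\Lambda/(\mu\,\mathcal{R}_0)$, i.e.\ $\mathcal{R}_0=S_0/S^*$, makes the threshold equivalence $\Lambda-\mu S^*>0\iff\mathcal{R}_0>1$ immediate, where the paper merely asserts it, and your closing remark that the summation relation places $E_*$ inside $\Omega$ is a further point the paper does not record.
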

\begin{proof}
\textbf{Proof of Part (i).} Any equilibrium point of \eqref{eq:1new} is a solution to the system
\begin{equation}\label{eq:8}
\begin{split}
&0 = \Lambda - \beta IS - \beta \epsilon HS + \alpha_I I - \mu S + \alpha_H H,\\
&0 = \beta I S + \beta \epsilon HS - (\alpha_I + \gamma_I + \mu + \delta)I,\\
&0 = \delta I - (\gamma_H + \mu + \alpha_H)H.
\end{split}
\end{equation}
It follows from the 3rd equation of \eqref{eq:8} that
\begin{equation}\label{eq:9a}
H = \dfrac{\delta}{\gamma_H + \mu + \alpha_H}I.
\end{equation}
Substituting this equation into the second equation of \eqref{eq:8} yields
\begin{equation}\label{eq:9}
I\bigg[\beta S + \beta\epsilon\dfrac{\delta}{\gamma_H + \mu + \alpha_H}S  - (\alpha_I + \gamma_I + \mu + \delta)\bigg] = 0.
\end{equation}
It is clear that \eqref{eq:9} always has a trivial solution $I = 0$. From this, we obtain $H = 0$. Substituting $I = H =0$ into the first equation \eqref{eq:8} yields $S = \Lambda/\mu$. Therefore, $(S_0, I_0, H_0) = (\Lambda/\mu,\,0,\,0)$ forms a DFE point of \eqref{eq:1new}.\\
\textbf{Proof of Part (i).} We now use a method developed in \cite{vandenDriessche} to compute the basic reproduction number of \eqref{eq:1new}. 
By reordering the variables in \eqref{eq:1new} as $X = (I, H, S)^T$, the DFE point is transformed to $X_f = (0, 0, \Lambda/\mu)$ and \eqref{eq:1new} can be written in the matrix form 
\begin{equation*}
X' = \mathcal{F}(X) - \mathcal{V}(X),
\end{equation*}
where
\begin{equation*}
\mathcal{F}(X) = 
\begin{pmatrix}
\beta IS + \beta\epsilon HS\\
\\
0\\
\\
0
\end{pmatrix},\quad
\mathcal{V}(X) =
\begin{pmatrix}
(\alpha_I + \gamma_I + \mu + \delta)I\\
\\
-\delta I + (\gamma_H + \mu + \alpha_H)H\\
\\
-\Lambda + \beta IS + \beta \epsilon HS - \alpha_I I + \mu S - \alpha_H H
\end{pmatrix}.
\end{equation*}
Consequently,
\begin{equation*}
D\mathcal{F}(X_f) = 
\begin{pmatrix}
\beta\dfrac{\Lambda}{\mu}& \beta\epsilon\dfrac{\Lambda}{\mu}&0\\
&&\\
0&0&0\\
&&\\
0&0&0
\end{pmatrix},
\quad
D\mathcal{V}(X_f) = 
\begin{pmatrix}
\alpha_I + \gamma_I + \mu + \delta&0&0\\
&&\\
-\delta&\gamma_H + \mu + \alpha_H&0\\
&&\\
\beta\dfrac{\Lambda}{\mu} - \alpha_I&\beta\epsilon\dfrac{\Lambda}{\mu} - \alpha_H&\mu
\end{pmatrix}.
\end{equation*}
Hence, the basic reproduction number is computed by
\begin{equation*}
\mathcal{R}_0 = \rho(FV^{-1}) =
\dfrac{
\beta \Lambda \left[ (\gamma_H + \mu + \alpha_H) + \epsilon \delta \right]
}{
\mu (\alpha_I + \gamma_I + \mu + \delta)(\gamma_H + \mu + \alpha_H)
}.
\end{equation*}
(iii) From \eqref{eq:9}, we obtain a non-trivial solution
\begin{equation*}
S_* = \dfrac{(\alpha_I + \gamma_I + \mu + \delta)(\gamma_H + \mu + \alpha_H)}{\beta \big[(\gamma_H + \mu + \alpha_H) + \epsilon \delta \big]}.
\end{equation*}
Substituting this equation and \eqref{eq:9a} into the first equation of \eqref{eq:8} yields
\begin{equation*}
I^* = \dfrac{\Lambda - \mu S^*}{\gamma_I + \mu + \delta - \dfrac{\alpha_H \delta}{\gamma_H + \mu + \alpha_H}}
\end{equation*}
It is easily seen that $I_* > 0$ if and only if $\mu S_* < \Lambda$, which is equivalent to $\mathcal{R}_0 > 1$. Hence, we conclude that $E_* = (S_*, I_*, H_*)$ forms a unique DEE point of \eqref{eq:1new} if and only if $\mathcal{R}_0 > 1$.

The proof is complete.
\end{proof}
Applying Theorem 1 in \cite{vandenDriessche} to \eqref{eq:1new} yields:
\begin{corollary}\label{Corollary1}
The DFE point $E_0$ is locally asymptotically stable with respect to the set $\Omega$ if $\mathcal{R}_0 < 1$ and is unstable if $\mathcal{R}_0 > 1$.
\end{corollary}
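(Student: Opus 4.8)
The statement is a direct application of the next-generation stability theorem of van den Driessche and Watmough \cite{vandenDriessche}, so the plan is to verify that the decomposition $X' = \mathcal{F}(X) - \mathcal{V}(X)$ already exhibited in the proof of Theorem~\ref{Theorem1} satisfies the structural hypotheses under which that theorem applies. Once these are confirmed, the conclusion is immediate: the relevant threshold is precisely the spectral radius $\mathcal{R}_0 = \rho(FV^{-1})$ computed in \eqref{eq:6}, and the cited theorem asserts exactly the dichotomy claimed, namely that $E_0$ is locally asymptotically stable when $\rho(FV^{-1}) < 1$ and unstable when $\rho(FV^{-1}) > 1$. I would order the compartments with the two infected classes $I, H$ first and the uninfected class $S$ last, identify $\mathcal{F}$ with the vector of new-infection terms and $\mathcal{V} = \mathcal{V}^- - \mathcal{V}^+$ with the remaining transfer terms, and set $m = 2$.

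I would then check the structural conditions in turn. On $\mathbb{R}_+^3$ every component of $\mathcal{F}$, of the outflow $\mathcal{V}^-$, and of the inflow $\mathcal{V}^+$ is nonnegative; whenever an infected compartment is empty its outflow vanishes (so $\mathcal{V}_1^-$ and $\mathcal{V}_2^-$ vanish on $\{I = H = 0\}$); the uninfected compartment $S$ receives no new-infection term, i.e. $\mathcal{F}_3 \equiv 0$; and on the disease-free subspace $\{I = H = 0\}$ both the new-infection flux $\mathcal{F}$ and the inflow $\mathcal{V}^+$ into the infected compartments vanish. The only transfer requiring attention is the hospitalization flux $\delta I$ from $I$ to $H$, which is correctly placed in $\mathcal{V}^+$ since it is movement between infected compartments rather than a new infection, and which indeed vanishes when $I = 0$.

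Finally I would confirm the algebraic requirements on the linearization at $E_0$. Restricting to the infected block, $F = \left(\begin{smallmatrix} \beta\Lambda/\mu & \beta\epsilon\Lambda/\mu \\ 0 & 0 \end{smallmatrix}\right)$ is entrywise nonnegative, while $V = \left(\begin{smallmatrix} \alpha_I + \gamma_I + \mu + \delta & 0 \\ -\delta & \gamma_H + \mu + \alpha_H \end{smallmatrix}\right)$ is lower triangular with positive diagonal and a nonpositive off-diagonal entry, hence a non-singular M-matrix with nonnegative inverse. The remaining hypothesis concerns the reduced disease-free dynamics: setting $I = H = 0$ leaves $S' = \Lambda - \mu S$, whose unique eigenvalue $-\mu < 0$ makes the disease-free equilibrium of the uninfected subsystem locally asymptotically stable. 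With all hypotheses verified, the cited theorem yields the stated result. I do not expect a genuine obstacle, as the model has the standard epidemiological form for which the framework was designed; the only point demanding care is the bookkeeping on the disease-free subspace, namely confirming that the inter-infected transfer $\delta I$ is assigned to $\mathcal{V}^+$ rather than to $\mathcal{F}$, so that the invariance of that subspace is respected.
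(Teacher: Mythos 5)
Your proposal is correct and takes essentially the same route as the paper: the paper obtains Corollary~\ref{Corollary1} simply by applying the cited theorem of \cite{vandenDriessche} to the splitting $X' = \mathcal{F}(X) - \mathcal{V}(X)$ already exhibited in the proof of Theorem~\ref{Theorem1}, which is exactly your plan. Your explicit verification of the structural hypotheses (nonnegativity of $\mathcal{F}$, $\mathcal{V}^\pm$, vanishing outflow on empty compartments, invariance of the disease-free subspace, the M-matrix property of $V$, and stability of the reduced system $S' = \Lambda - \mu S$) is left implicit in the paper, so your write-up is, if anything, more complete.
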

We now analyze global asymptotic stability of the DFE point.
\begin{theorem}
The DFE point $E_0$ is not only locally asymptotically stable but also globally asymptotically stable if $\mathcal{R}_0 < 1$.
\end{theorem}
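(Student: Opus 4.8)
The plan is to establish global attractivity of $E_0$ by means of a Lyapunov function supported on the infected compartments together with LaSalle's invariance principle, and then to upgrade this to global asymptotic stability using the local asymptotic stability already provided by Corollary~\ref{Corollary1}. Throughout I abbreviate $k_1 = \alpha_I+\gamma_I+\mu+\delta$ and $k_2 = \gamma_H+\mu+\alpha_H$.

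First I would reduce the problem to the compact feasible region $\Omega$ of \eqref{eq:5}. By Proposition~\ref{Proposition1}, $\mathbb{R}_+^4$ is positively invariant and $\limsup_{t\to\infty}N(t)\le \Lambda/\mu$; since moreover $N' \le \Lambda - \mu N$, the set $\Omega$ is itself positively invariant and attracts every nonnegative solution. Hence it suffices to prove convergence to $E_0$ for trajectories confined to $\Omega$, on which in particular $S \le \Lambda/\mu$.

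Next I would introduce the candidate function
\[
V(I,H) = I + \frac{\beta\epsilon\Lambda}{\mu\,k_2}\,H,
\]
which is nonnegative on $\Omega$ and vanishes exactly on $\{I=H=0\}$. Differentiating $V$ along \eqref{eq:1new} and using $S\le \Lambda/\mu$, the weight is chosen precisely so that the coefficient of $H$ cancels, while the coefficient of $I$ condenses into $k_1(\mathcal{R}_0-1)$ once the formula \eqref{eq:6} for $\mathcal{R}_0$ is recognized; explicitly,
\[
V' \le \Big(\tfrac{\beta\Lambda}{\mu} - k_1 + \tfrac{\beta\epsilon\Lambda\,\delta}{\mu\,k_2}\Big)I = k_1(\mathcal{R}_0-1)\,I .
\]
When $\mathcal{R}_0 < 1$ this yields $V' \le 0$ on $\Omega$, with $V'=0$ if and only if $I=0$. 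I would then apply LaSalle's invariance principle: every trajectory converges to the largest invariant set $M \subseteq \{(S,I,H)\in\Omega : I = 0\}$. Along any complete trajectory in $M$ one has $I\equiv 0$, so the third equation reduces to $H' = -k_2 H$; boundedness of the trajectory for all $t$ (in particular as $t\to-\infty$) forces $H\equiv 0$, after which the first equation becomes $S' = \Lambda - \mu S$, forcing $S\equiv \Lambda/\mu$. Thus $M=\{E_0\}$, so $E_0$ is globally attractive in $\Omega$, and combining with Corollary~\ref{Corollary1} gives global asymptotic stability.

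The main subtlety I anticipate lies in this last step: the chosen $V$ depends only on the infected variables $I$ and $H$, so it is merely positive semidefinite in the full state and cannot by itself deliver asymptotic stability through the classical Lyapunov theorem. The reduction of the limit set to the single point $E_0$ — in particular the recovery of the behaviour of the $S$ component — genuinely requires the invariance-principle argument above. Care is also needed in pinning down the weight $\beta\epsilon\Lambda/(\mu k_2)$ exactly, so that the $H$-contribution to $V'$ vanishes and the $I$-coefficient collapses cleanly to $k_1(\mathcal{R}_0-1)$, and in noting that the degenerate case $\epsilon=0$ (where $V=I$) is handled uniformly by the same backward-time boundedness argument.
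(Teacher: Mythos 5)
Your proof is correct, but it takes a genuinely different route from the paper's. The paper invokes the Castillo-Chavez--Feng--Huang framework: it splits the system into the uninfected component $X=S$ and the infected block $Z=(I,H)^T$, checks condition $(H1)$ (that $X'=F(X,0)=\Lambda-\mu X$ has $\Lambda/\mu$ globally asymptotically stable) and condition $(H2)$ (that $G(X,Z)=AZ-\widehat{G}(X,Z)$ with $A$ an $M$-matrix and $\widehat{G}\geq 0$ on $\Omega$), and then cites the global-stability theorem of \cite{Castillo-Chavez}. You instead construct the explicit linear Lyapunov function $V=I+\frac{\beta\epsilon\Lambda}{\mu k_2}H$ and run LaSalle's invariance principle. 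Your algebra is exactly right: writing $V'=\beta(I+\epsilon H)\bigl(S-\tfrac{\Lambda}{\mu}\bigr)+k_1(\mathcal{R}_0-1)I$ (using $k_1\mathcal{R}_0=\tfrac{\beta\Lambda}{\mu}+\tfrac{\beta\epsilon\Lambda\delta}{\mu k_2}$) shows $V'\leq 0$ on $\Omega$ with $\{V'=0\}\subseteq\{I=0\}$, and your backward-boundedness argument correctly reduces the largest invariant set to $\{E_0\}$. (One small imprecision: $V'=0$ is not \emph{equivalent} to $I=0$ when $\epsilon H(S-\Lambda/\mu)\neq 0$, but only the inclusion $\{V'=0\}\subseteq\{I=0\}$ is needed, so nothing breaks.) What your approach buys is a self-contained, quantitative argument with an explicit decay inequality; what the paper's approach buys is brevity by outsourcing the invariance-principle work to a citable theorem. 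The two are in fact close cousins: your weight vector $(1,\beta\epsilon\Lambda/(\mu k_2))$ plays the role of a positive functional adapted to the matrix $A$ in the paper's decomposition, and the paper's sign condition $\widehat{G}\geq 0$ on $\Omega$ is precisely your inequality $S\leq\Lambda/\mu$.

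One step you should tighten is the reduction to $\Omega$. You assert that $\Omega$ ``attracts every nonnegative solution,'' hence it suffices to treat trajectories confined to $\Omega$; but attraction in the $\limsup$ sense does not give finite-time entry into $\Omega$ --- for instance, with $I(0)=H(0)=0$ and $S(0)>\Lambda/\mu$ the solution remains outside $\Omega$ for all time --- so LaSalle applied on $\Omega$ does not literally cover all initial data in $\mathbb{R}_+^3$. The standard repair is to work on $\Omega_\eta=\{S+I+H\leq\Lambda/\mu+\eta\}$, which \emph{is} absorbing in finite time (since $N'\leq-\mu\eta$ whenever $N\geq\Lambda/\mu+\eta$), rescale the weight to $\beta\epsilon(\Lambda/\mu+\eta)/k_2$, and use that $\mathcal{R}_0<1$ is an open condition so the coefficient of $I$ stays negative for $\eta$ small. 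I note that the paper's own proof carries the same caveat, since $(H2)$ is also verified only on $\Omega$, so this is a shared imprecision rather than a defect unique to your argument.
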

\begin{proof}
To show the GAS of the DFE point, we will verify that \eqref{eq:1new} satisfies Conditions $(H1)-(H2)$ given in \cite{Castillo-Chavez}. Indeed, we rewrite \eqref{eq:1new} in the form
\begin{equation}\label{eq:10}
\begin{split}
&X' = F(X, Z),\\
&Z' = G(X, Z),
\end{split}
\end{equation}
where
\begin{equation*}
\begin{split}
&X = S, \quad Z = (I, H)^T,\\
&F(X, Z) = \Lambda - \beta IS - \beta \epsilon HS + \alpha_I I - \mu S + \alpha_H H,\\
&G(X, Z) =
\begin{pmatrix}
\beta I S + \beta \epsilon HS - (\alpha_I + \gamma_I + \mu + \delta)I\\
\delta I - (\gamma_H + \mu + \alpha_H)H
\end{pmatrix}.
\end{split}
\end{equation*}
Then, the DFE point is transformed to $U_0 = (X^*, 0) =  (\Lambda/\mu,\,\,0)$.

Note that the subsystem $X' = F(X, 0)$ is given by
\begin{equation*}
X' = \Lambda - \mu X.
\end{equation*}
Consequently, $X^*$ is a globally asymptotically stable equilibrium point of this equation. Hence, Condition $(H1)$ is satisfied.

To verify Condition $(H2)$, we rewrite the function $G(X, Z)$ in the form
\begin{equation*}
G(X, Z) = AZ - \widehat{G}(X, Z),
\end{equation*}
where
\begin{equation*}
A =
\begin{pmatrix}
\beta X^* - (\alpha_I + \gamma_I + \mu + \delta)&\beta\epsilon X^*\\
&\\
\delta&-(\gamma_H + \mu + \alpha_H)
\end{pmatrix},\quad
\widehat{G}(X, Z) =
\begin{pmatrix}
\beta I(X^* - X) + \beta\epsilon H(X^* - X)\\
&\\
0
\end{pmatrix}.
\end{equation*}
It is easy to verify that $A$ is an $M$-matrix and $\widehat{G}(X, Z) \geq 0$ for all $(X, Z)$ in $\Omega$, which implies that Condition $(H2)$ holds. 

Hence, by using the global stability results developed in \cite{Castillo-Chavez}, we obtain the desired conclusion. The proof is complete.
\end{proof}
We now consider the case $\mathcal{R}_0 > 1$ and investigate asymptotic stability of the DEE point $E_*$.
\begin{theorem}\label{Theorem2}
The DEE point $E_*$ is locally asymptotically stable if and only if $\mathcal{R}_0 > 1$.
\end{theorem}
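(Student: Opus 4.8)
The plan is to prove local asymptotic stability by linearization: I would show that every eigenvalue of the Jacobian $J(E_*)$ of the right-hand side of \eqref{eq:1new} evaluated at $E_*=(S_*,I_*,H_*)$ has negative real part, via the Routh--Hurwitz criterion applied to the characteristic cubic $\lambda^3 + c_1\lambda^2 + c_2\lambda + c_3$. Since Theorem \ref{Theorem1}(iii) already guarantees that $E_*$ exists precisely when $\mathcal{R}_0 > 1$, the ``only if'' direction is immediate (a nonexistent equilibrium cannot be stable, so stability forces $\mathcal{R}_0 > 1$); the substantive content is the ``if'' direction, where I assume $\mathcal{R}_0 > 1$ and prove stability.

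The crucial preliminary step is to simplify the entries of $J(E_*)$ using the equilibrium identities from the proof of Theorem \ref{Theorem1}. Writing $a=\alpha_I+\gamma_I+\mu+\delta$, $b=\gamma_H+\mu+\alpha_H$, and $p=\beta I_*+\beta\epsilon H_*>0$, the relation $\beta S_*(b+\epsilon\delta)=ab$ obtained from \eqref{eq:9} gives $J_{22}=\beta S_*-a=-\beta\epsilon\delta S_*/b$, and together with $H_*=\delta I_*/b$ it produces the key cancellation
\begin{equation*}
J_{22}J_{33}-J_{23}J_{32}=0,
\end{equation*}
which is what makes the whole computation tractable. The sign of $c_1=-\operatorname{tr}J(E_*)$ is then immediate, since each diagonal entry is nonpositive and $J_{11}=-(p+\mu)<0$, so $c_1>0$. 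For $c_3=-\det J(E_*)$, expanding along the first column and using $J_{31}=0$ together with the vanishing minor collapses the determinant to $\det J(E_*)=-p\,D$, where $D=(\gamma_I+\mu+\delta)(\gamma_H+\mu+\alpha_H)-\alpha_H\delta$ is exactly the denominator appearing in \eqref{eq:7}. Since $D=(\gamma_I+\mu)(\gamma_H+\mu+\alpha_H)+\delta(\gamma_H+\mu)>0$, we conclude $c_3=pD>0$.

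The main obstacle is the remaining Routh--Hurwitz inequality $c_1c_2-c_3>0$ (which also subsumes $c_2>0$), because $c_2$ contains the indefinite-sign term $p(\beta S_*-\alpha_I)$. I would handle it by setting $A=p+\mu$, $B=\beta\epsilon\delta S_*/b$, $C=b$, $g=\gamma_I+\mu+\delta$, and invoking the further equilibrium identities $\beta S_*-\alpha_I=g-B$ and $A-p=\mu$. After substituting these, $c_1c_2-c_3$ telescopes: the potentially negative cross-terms cancel against $-pD$, leaving
\begin{equation*}
c_1c_2-c_3=(A+B+C)(AC+\mu B)+p\big[(A+B)g+\alpha_H\delta\big].
\end{equation*}

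This final expression is a sum of manifestly nonnegative terms and is strictly positive because $A,C,g,p,\mu>0$; hence all three Routh--Hurwitz conditions $c_1>0$, $c_3>0$, $c_1c_2-c_3>0$ hold and $E_*$ is locally asymptotically stable whenever $\mathcal{R}_0>1$. I expect the only genuinely delicate part to be the bookkeeping that drives the cancellation — choosing at each stage which equilibrium relation to insert so that the messy terms collapse into the closed form above; once that identity is reached, the conclusion follows without further effort.
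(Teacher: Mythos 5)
Your proof is correct, and while it shares the paper's top-level strategy (linearization at $E_*$ plus the Routh--Hurwitz criterion for the characteristic cubic), your organization of the computation is genuinely different and in one respect stronger. The paper expands the characteristic polynomial by brute force into long expressions for $a_1,a_2,a_3$, proves $a_1>0$ and $a_3>0$ from the equilibrium identity $\beta S_*\bigl(1+\epsilon\delta/(\gamma_H+\mu+\alpha_H)\bigr)=\alpha_I+\gamma_I+\mu+\delta$, but then merely asserts that ``by some simple algebraic manipulations, we can show that $a_1a_2-a_3>0$'' without exhibiting them. Your structural observation---that this same identity forces the principal minor $J_{22}J_{33}-J_{23}J_{32}$ to vanish---is exactly what the paper's expansion obscures, and I have checked it: with $b=\gamma_H+\mu+\alpha_H$ one gets $J_{22}=-\beta\epsilon\delta S_*/b$, hence $J_{22}J_{33}=\beta\epsilon\delta S_*=J_{23}J_{32}$. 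From this, with $p=\beta I_*+\beta\epsilon H_*$, $A=p+\mu$, $B=\beta\epsilon\delta S_*/b$, $C=b$, $g=\gamma_I+\mu+\delta$, your closed forms $c_1=A+B+C$, $c_2=AC+\mu B+pg$, $c_3=p\,(gC-\alpha_H\delta)$ are all exact (the determinant collapses along the first column since $J_{31}=0$ and the $(2,3)$ minor vanishes, producing precisely the positive denominator $D=gC-\alpha_H\delta$ from \eqref{eq:7}), and your final identity
\begin{equation*}
c_1c_2-c_3=(A+B+C)(AC+\mu B)+p\bigl[(A+B)g+\alpha_H\delta\bigr]
\end{equation*}
checks out term by term, making all three Routh--Hurwitz conditions visible by inspection. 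So your argument actually completes the step the paper leaves as an unproved assertion; the trade-off is that it hinges on noticing the vanishing minor, whereas the paper's route is mechanical but unfinished on the page. Your disposal of the ``only if'' direction via nonexistence of $E_*$ when $\mathcal{R}_0\le 1$ is also the right reading of the theorem statement; the paper silently does the same, proving only the ``if'' direction.
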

\begin{proof}
The Jacobian matrix of \eqref{eq:1new} is given by
\[
J(S, I, H) =
\begin{pmatrix}
-\beta I - \beta \epsilon H - \mu &
-\beta S + \alpha_I &
-\beta \epsilon S + \alpha_H \\
\beta I + \beta \epsilon H &
\beta S - (\alpha_I + \gamma_I + \mu + \delta) &
\beta \epsilon S \\
0 & \delta & -(\gamma_H + \mu + \alpha_H)
\end{pmatrix},
\]
which implies that
\[
J(S_*, I_*, H_*) =
\begin{pmatrix}
-\beta I_* - \beta \epsilon H_* - \mu &
-\beta S_* + \alpha_I &
-\beta \epsilon S_* + \alpha_H \\
\beta I_* + \beta \epsilon H_* &
\beta S_* - (\alpha_I + \gamma_I + \mu + \delta) &
\beta \epsilon S_* \\
0 & \delta & -(\gamma_H + \mu + \alpha_H)
\end{pmatrix}.
\]
By setting
\begin{equation*}
\begin{split}
&M_0 = \Lambda - \mu S_*,\\
&M_1 = \dfrac{\gamma_H + \mu + \alpha_H}{(\gamma_I + \mu + \delta)(\gamma_H + \mu + \alpha_H) - \alpha_H\delta},\\
&M_2 = \dfrac{\delta}{\gamma_H + \mu + \alpha_H},\\
&B = M_2,
\end{split}
\end{equation*}
we obtain $I_* = M_1(\Lambda - \mu S_*) = M_1M_0$ and $H_* = M_1M_2(\Lambda - \mu S_*) = M_0M_1M_2$. Using this relation reduces $J(S_*, I_*, H_*)$ to
\[
J(S_*, M_0) =
\begin{pmatrix}
-\beta\,M_0\,M_1\,(1+\epsilon B)-\mu 
& -\beta S_*+\alpha_I 
& -\beta\epsilon S_*+\alpha_H \\[6pt]
\beta\,M_0\,M_1\,(1+\epsilon B) 
& \beta S_*-(\alpha_I+\gamma_I+\mu+\delta) 
& \beta\epsilon S_* \\[6pt]
0 & \delta & -(\gamma_H+\mu+\alpha_H)
\end{pmatrix},
\]

The characteristic polynomial of $J(S_*,I_*,H_*)$ is given by $p(\lambda) = \lambda^3 + a_1\lambda^2 + a_2\lambda + a_3$, where
\begin{equation*}
\begin{split}
a_1 &= (\alpha_H+\alpha_I+\delta+\gamma_H+\gamma_I+3\mu)\;-\;\beta S_* \;+\;\beta M_0 M_1\;+\;\beta\epsilon B\,M_0 M_1,\\
a_2 &= \alpha_H\alpha_I+\alpha_H\delta+\alpha_H\gamma_I+\alpha_I\gamma_H+\delta\gamma_H+\gamma_H\gamma_I + 2\mu(\alpha_H+\alpha_I+\delta+\gamma_H+\gamma_I)+3\mu^2 \\
&+(\beta S_*)\big(-\alpha_H-\gamma_H-2\mu-\epsilon\delta\big) + (\beta M_0 M_1)\big(\alpha_H+\delta+\gamma_H+\gamma_I+2\mu\big)\\
&+(\beta\epsilon B\,M_0 M_1)\big(\alpha_H+\delta+\gamma_H+\gamma_I+2\mu\big),\\
a_3 &= \mu^3+\mu^2(\alpha_H+\alpha_I+\delta+\gamma_H+\gamma_I) +\mu(\alpha_H\alpha_I+\alpha_H\delta+\alpha_H\gamma_I+\alpha_I\gamma_H+\delta\gamma_H+\gamma_H\gamma_I) \\
&+(\beta S_*)\big(-\mu^2-\mu(\alpha_H+\gamma_H+\epsilon\delta)\big)  +(\beta M_0 M_1)\big(\mu^2+\mu(\alpha_H+\delta+\gamma_H+\gamma_I)+\alpha_H\gamma_I+\delta\gamma_H+\gamma_H\gamma_I\big) \\
& +(\beta\epsilon B\,M_0 M_1)\big(\mu^2+\mu(\alpha_H+\delta+\gamma_H+\gamma_I)+\alpha_H\gamma_I+\delta\gamma_H+\gamma_H\gamma_I\big).
\end{split}
\end{equation*}
We now show that $a_1 > 0, a_3 > 0$ and $a_1a_2 - a_3 > 0$. Indeed, the second equation of \eqref{eq:8} implies that
\[
\beta S_* \bigl(1+\epsilon B \bigr) \;=\; \alpha_I + \gamma_I + \mu + \delta,
\]
or equivalent to
\[
\beta S_* \;=\; \frac{\alpha_I + \gamma_I + \mu + \delta}{1+\epsilon B}.
\]
Substituting this into $a_1$ yields
\[
a_1 
= \beta M_0 M_1 \,(1+\epsilon B) 
+ \mu 
+ (\gamma_H+\mu+\alpha_H) 
+ \frac{\epsilon B}{1+\epsilon B}\,(\alpha_I+\gamma_I+\mu+\delta) > 0.
\]

It is easy to see that
\[
(\beta S_*)\bigl[\mu^2+\mu(\alpha_H+\gamma_H+\epsilon\delta)\bigr]
= \mu\,(\alpha_I+\gamma_I+\mu+\delta)\,(\gamma_H+\mu+\alpha_H),
\]
which implies that 
\[
a_3 \;=\; \beta\,M_0\,M_1\,
\left(1+\frac{\epsilon\,\delta}{\gamma_H+\mu+\alpha_H}\right)\,
\Bigl[\mu^2+\mu(\alpha_H+\delta+\gamma_H+\gamma_I)+\alpha_H\gamma_I+\delta\gamma_H+\gamma_H\gamma_I\Bigr] > 0.
\]

By some simple algebraic manipulations, we can show that $a_1a_2 - a_3 > 0$.

Thus, we have shown that $a_1$, $a_3 > 0$ and $a_1a_2 - a_3 > 0$. We deduce from Routh-Hurwitz criteria (Theorem 4.4 in \cite{Allen}) that all of the roots of the characteristic polynomial are negative or have negative real part. This means that $E_*$ is locally asymptotically stable. The proof is complete.
\end{proof}
\begin{remark}\label{Remark1}
It is not easy to establish the GAS of the DFE equilibrium point; however, numerical simulations reported in Section \ref{Sec3} suggest that this equilibrium is not only locally asymptotically stable but also globally asymptotically stable. This is consistent with well-known results on the asymptotic stability of epidemiological models \cite{Allen, Brauer1, Brauer2, Martcheva}.

Since the DFE point is globally asymptotically stable, in practical situations we should try to control the parameters such that the basic reproduction number is less than $1$. In this case, the DFE point is globally asymptotically stable, implying that the disease will be eradicated. Through the sensitivity analysis of $\mathcal{R}_0$ with respect to the controllable parameters, we can obtain useful insights for disease prevention. More clearly,  the derivative of $\mathcal{R}_0$ with respect to the controllable parameters $(\beta,\, \epsilon,\, \alpha_I,\, \alpha_H,\, \gamma_I,\, \gamma_H)$ are given by
\[
\begin{aligned}
\frac{\partial \mathcal{R}_0}{\partial \beta}
&= \frac{\Lambda \,\big[(\gamma_H + \mu + \alpha_H) + \epsilon \delta\big]}
{\mu \, (\alpha_I + \gamma_I + \mu + \delta)(\gamma_H + \mu + \alpha_H)} \;>\; 0, \\[8pt]
\frac{\partial \mathcal{R}_0}{\partial \epsilon}
&= \frac{\beta \Lambda \delta}
{\mu \, (\alpha_I + \gamma_I + \mu + \delta)(\gamma_H + \mu + \alpha_H)} \;>\; 0, \\[8pt]
\frac{\partial \mathcal{R}_0}{\partial \alpha_I}
&= -\,\frac{\beta \Lambda \,\big[(\gamma_H + \mu + \alpha_H) + \epsilon \delta\big]}
{\mu \, (\alpha_I + \gamma_I + \mu + \delta)^{2}(\gamma_H + \mu + \alpha_H)} \;<\; 0, \\[8pt]
\frac{\partial \mathcal{R}_0}{\partial \gamma_I}
&= -\,\frac{\beta \Lambda \,\big[(\gamma_H + \mu + \alpha_H) + \epsilon \delta\big]}
{\mu \, (\alpha_I + \gamma_I + \mu + \delta)^{2}(\gamma_H + \mu + \alpha_H)} \;<\; 0, \\[8pt]
\frac{\partial \mathcal{R}_0}{\partial \alpha_H}
&= -\,\frac{\beta \Lambda \,\epsilon \delta}
{\mu \, (\alpha_I + \gamma_I + \mu + \delta)(\gamma_H + \mu + \alpha_H)^{2}} \;<\; 0, \\[8pt]
\frac{\partial \mathcal{R}_0}{\partial \gamma_H}
&= -\,\frac{\beta \Lambda \,\epsilon \delta}
{\mu \, (\alpha_I + \gamma_I + \mu + \delta)(\gamma_H + \mu + \alpha_H)^{2}} \;<\; 0.
\end{aligned}
\]
From this, we can see the influence of each parameter on the disease transmission process. In numerical simulations performed in Section \ref{Sec4}, we will compute sensitivity indices using a particular set of the parameters and show the influence of the controllable parameters.
\end{remark}
\section{A Health Insurance Pricing Application}\label{Sec3}
Suppose a policyholder enrolls in an annual insurance plan with a fixed premium. If the policyholder contracts an infectious disease and requires medical treatment, the insurer will provide coverage for the treatment costs, subject to the fulfillment of all policy conditions  \cite{Chernov, Feng, Feng1, Francis, Nkeki, Nkeki1, Zhai, Zinihi}.
%
Assume an insurance model in which premiums are collected from the susceptible population group (class $S$) at a constant rate of $\pi$ per unit of time. In exchange, the insurer provides benefit payments as follows:

\begin{itemize}
    \item a benefit amount $b_I$ for infected individuals ($I$);
    \item a benefit amount $b_H$ for hospitalized patients ($H$);
    \item in the event of death caused by the disease, a lump-sum payment of $d$ will be made.
\end{itemize}
According to the fundamental principle of equivalence \cite{Feng}, the level premium is determined such that the expected present value of benefit payments equals the expected present value of premium income
\[
\mathbb{E}[\text {Present value of benefit outgo}]=\mathbb{E}[\text{Present value of premium income}].
\]
Hence, the {{equivalence principle}} equation for this insurance model can be expressed as follows:
\begin{equation}\label{eq:Pi}
\pi \int_0^T S(t) dt = b_I \int_0^T I(t) dt + b_H \int_0^T H(t) dt + d \int_0^T D(t) dt.
\end{equation}
From this equation, we obtain a premium rate that represents the zero-profit condition for insurance companies over a coverage period of length $T$:
\begin{equation}\label{eq:Pi1}
\pi=\dfrac{b_I \int_0^T I(t) dt + b_H \int_0^T H(t) dt + d \int_0^T [\gamma_I I(t) + \gamma_H H(t)]dt}{\int_0^T S(t) dt}.
\end{equation}
After determining the zero-profit premium level and the associated loadings, the reserve level becomes the primary consideration for insurance companies in practice. The reserve level is established as
\begin{equation}\label{Pi2}
V(t) = \pi{\int_t^T S(\tau) d\tau} - \bigg[b_I \int_t^T I(\tau) d\tau + b_H \int_t^T H(\tau) d\tau + d \int_t^T (\gamma_I I(\tau) + \gamma_H H(\tau))d\tau\bigg].
\end{equation}
The reserve level serves as a key indicator reflecting the insurer’s financial equilibrium throughout the coverage period \cite{Zhai}:
\begin{itemize}
\item $V(t) < 0$: The insurer maintains a negative reserve at time $t$ indicating that the expected liabilities exceed the premium inflows. Such a condition necessitates additional capital to preserve solvency and is indicative of a prudent or conservative pricing approach.
\item $V(t) > 0$: The insurer holds a positive reserve, implying that the present value of premium income exceeds the expected future liabilities. Although this may appear to indicate a surplus, it often reflects an overpriced premium structure, which raises concerns regarding fairness, regulatory compliance, and long-term financial sustainability. In actuarial practice, persistently negative reserves are generally deemed unacceptable.
\item $V(t) = 0$: This represents the ideal actuarial equilibrium, in which the premium is precisely calibrated to align with the expected future liabilities. It adheres to the principle of equivalence, which posits that, in a fairly priced insurance model, the present value of benefits and expenses must equal the present value of premium income.
\end{itemize}
Hence, if $\pi$ is set too low, the reserve $V$ may turn negative during the simulation period, indicating that the scheme is underfunded and unable to meet its expected liabilities. Conversely, if $\pi$ is set excessively high, the reserve may surpass the expected claims by a considerable margin, suggesting overpricing or operational inefficiency.

To determine an admissible value of $\pi$ for a $T$-day insurance policy, 
the premium rate should be chosen such that $V(t) \geq 0$ for all $t \in [0, T]$. 
This leads to the following actuarial constraint:
\begin{equation}\label{eq:Pi3}
\pi \geq \pi^* :=  \max_{t \in[0, T]}\dfrac{b_I \int_t^T I(\tau) d\tau + b_H \int_t^T H(\tau) d\tau + d\int_t^T [\gamma_I I(\tau) + \gamma_H H(\tau)]d\tau}{\int_t^T S(\tau) d\tau},
\end{equation}
which guarantees that the discounted value of future premiums remains sufficient to cover future liabilities. It further defines a practical upper limit for $\pi$ which can be estimated numerically from the model trajectories.
\section{Numerical Experiments}\label{Sec4}
In this section, we conduct numerical examples to support and illustrate the theoretical assertions.
\subsection{Dynamics of the SISHD model when $\mathcal{R}_0 < 1$}
In this subsection, we investigate dynamical behaviour of the proposed SISHD model \eqref{eq:2} when the basic reproduction number $\mathcal{R}_0$ less than $1$. For this purpose, we consider \eqref{eq:2} with some parameter sets given in Table \ref{Table1}. For these parameter sets, $\mathcal{R}_0 < 1$; consequently, the DFE point is globally asymptotically stable. Also, the sets of initial data given in Table \ref{Table2} are used in numerical simulations.
\begin{table}[H]
\centering
\caption{Five parameter sets for which $\mathcal{R}_0 < 1$}
\begin{tabular}{ccccccccccccccccccccccccccccccccccccc}
\hline
\textbf{Set} & $\Lambda$ & $\mu$ & $\beta$ & $\epsilon$ & $\alpha_I$ & $\gamma_I$ & $\delta$ & $\gamma_H$ & $\alpha_H$ & $\mathcal{R}_0$ \\
\hline
A1 & 20.0 & 0.02 & 0.00012 & 0.20 & 0.05 & 0.10 & 0.02 & 0.05 & 0.01 & 0.6632 \\
A2 & 20.0 & 0.02 & 0.00018 & 0.25 & 0.06 & 0.12 & 0.03 & 0.06 & 0.02 & 0.8413 \\
A3 & 30.0 & 0.03 & 0.00010 & 0.15 & 0.04 & 0.08 & 0.01 & 0.04 & 0.01 & 0.6367 \\
A4 & 10.0 & 0.01 & 0.00020 & 0.30 & 0.08 & 0.15 & 0.02 & 0.05 & 0.02 & 0.8269 \\
A5 & 25.0 & 0.025 & 0.00014 & 0.10 & 0.05 & 0.09 & 0.03 & 0.06 & 0.015 & 0.7395 \\
\hline
\end{tabular}
\label{Table1}
\end{table}
\begin{table}[H]
\centering
\caption{Initial data $(S_0, I_0, H_0)$ used for numerical simulations of the SIH model \eqref{eq:1new}.}
\label{Table2}
\begin{tabular}{ccccccccccccccccccccccccccc}
\hline
\textbf{Initial set} & $S_0$ & $I_0$ & $H_0$ \\
\hline
IC$_1$ & 800 & 100 & 100 \\
IC$_2$ & 700 & 200 & 50  \\
IC$_3$ & 500 & 250 & 250 \\
IC$_4$ & 600 & 100 & 300 \\
IC$_5$ & 400 & 300 & 300 \\
\hline
\end{tabular}
\end{table}
In all numerical examples reported below, we use the classical fourth-order Runge-Kutta method (\cite{Ascher}) with a small step size, namely $h = 10^{03}$,  to numerically solve \eqref{eq:1new} over the interval $[0, 365]$. The obtained solutions of \eqref{eq:1new} are depicted in Figures \ref{Fig:1}-\ref{Fig:5}.
It is clear that the solutions are positive, stable and convergent to the DEE point. This is evidence supporting the theoretical findings presented in Section \ref{Sec2}.
\begin{figure}[H]
\centering
\includegraphics[height=9.5cm,width=12cm]{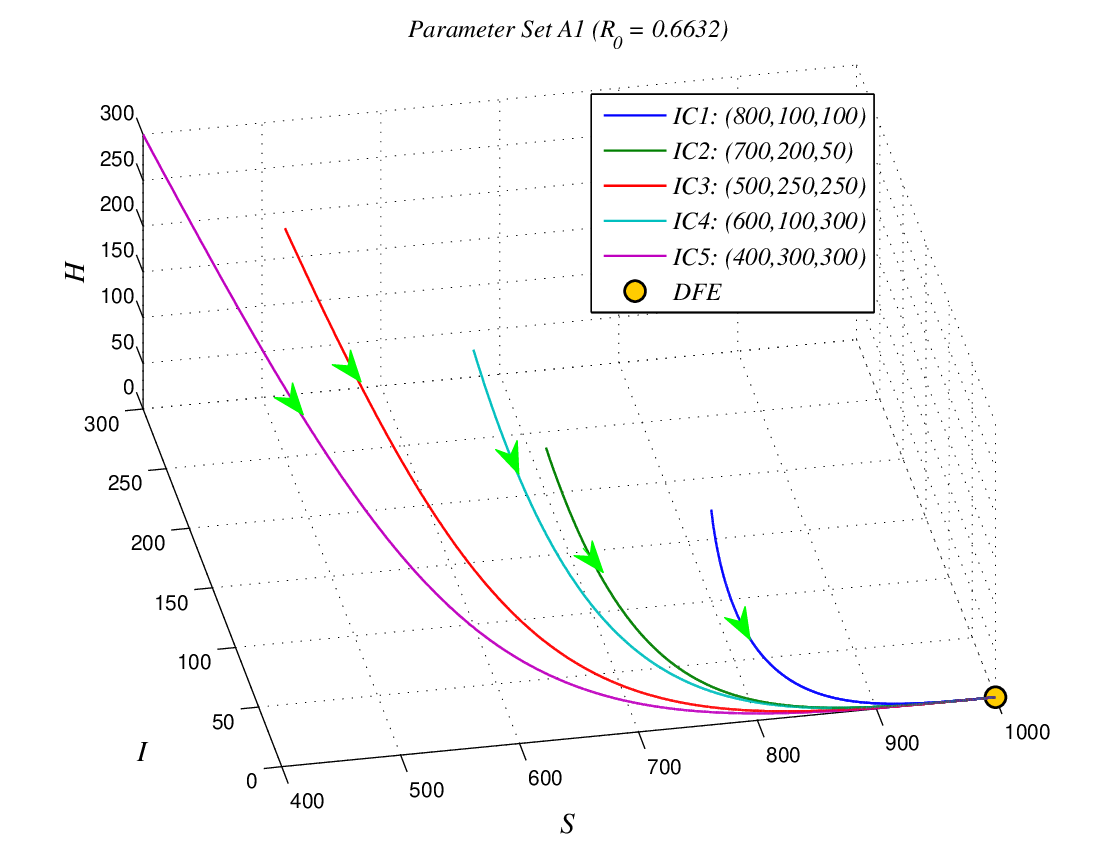}
\caption{Dynamics of \eqref{eq:1new} for the parameter set A1 in Table \ref{Table1}}\label{Fig:1}
\end{figure}
\begin{figure}[H]
\centering
\includegraphics[height=9.5cm,width=12cm]{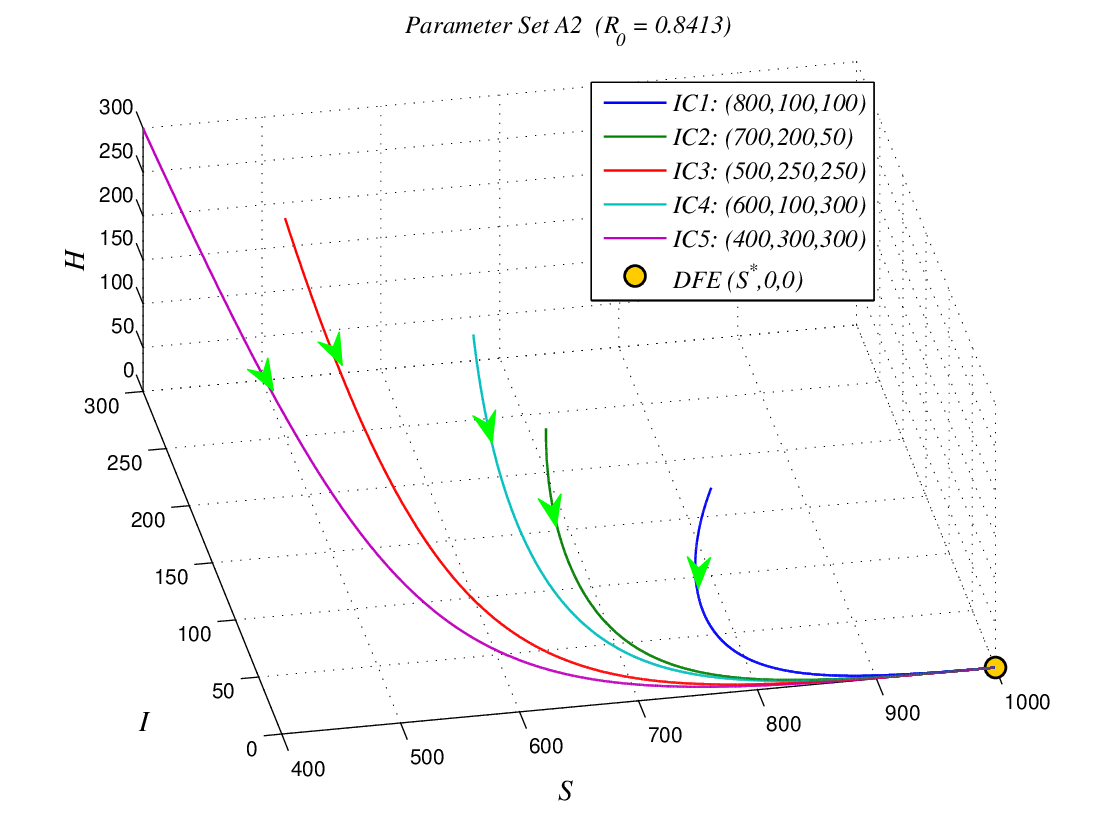}
\caption{Dynamics of \eqref{eq:1new} for the parameter set A2 in Table \ref{Table1}}\label{Fig:2}
\end{figure}
\begin{figure}[H]
\centering
\includegraphics[height=9.5cm,width=12cm]{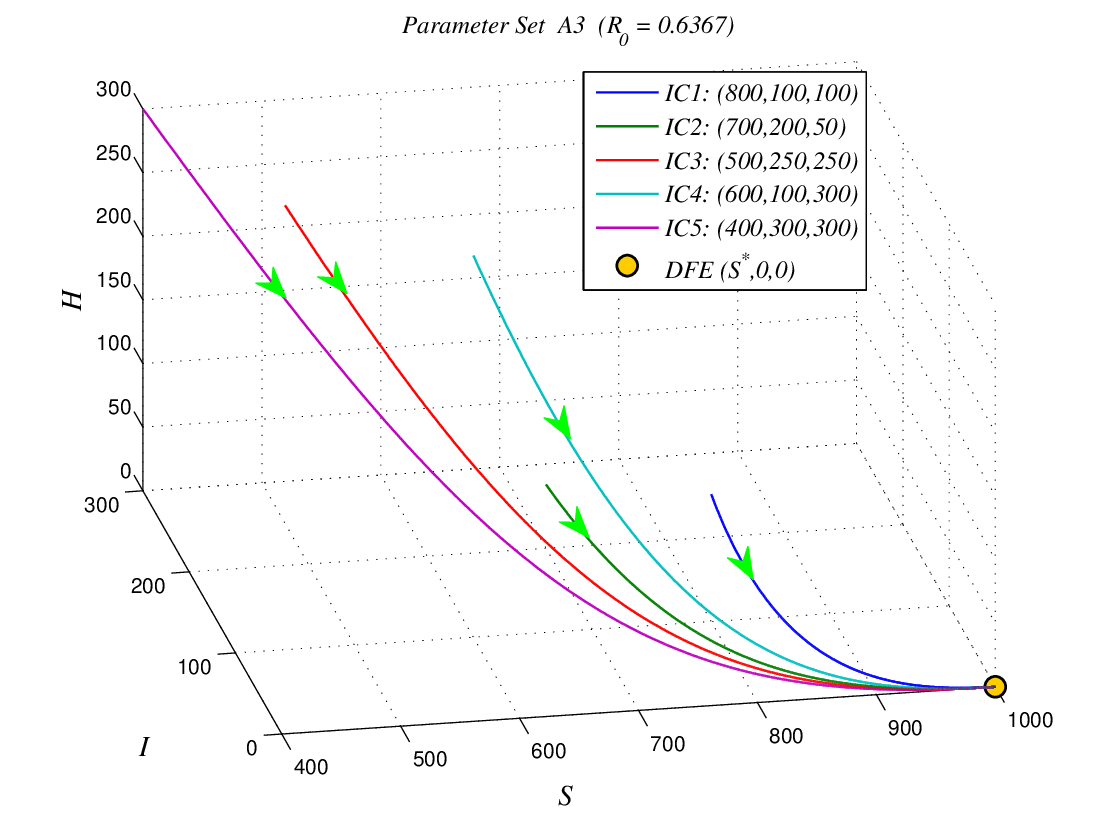}
\caption{Dynamics of \eqref{eq:1new} for the parameter set A3 in Table \ref{Table1}}\label{Fig:3}
\end{figure}
\begin{figure}[H]
\centering
\includegraphics[height=9.5cm,width=12cm]{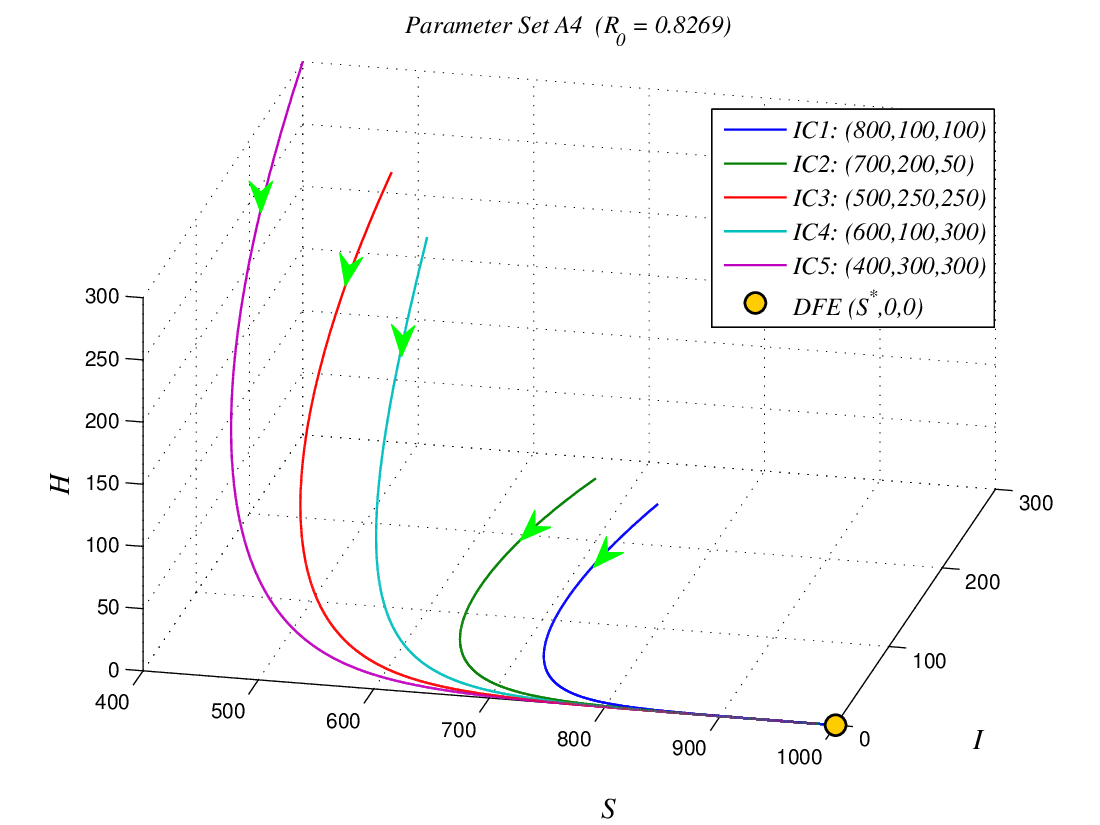}
\caption{Dynamics of \eqref{eq:1new} for the parameter set A4 in Table \ref{Table1}}\label{Fig:4}
\end{figure}
\begin{figure}[H]
\centering
\includegraphics[height=9.5cm,width=12cm]{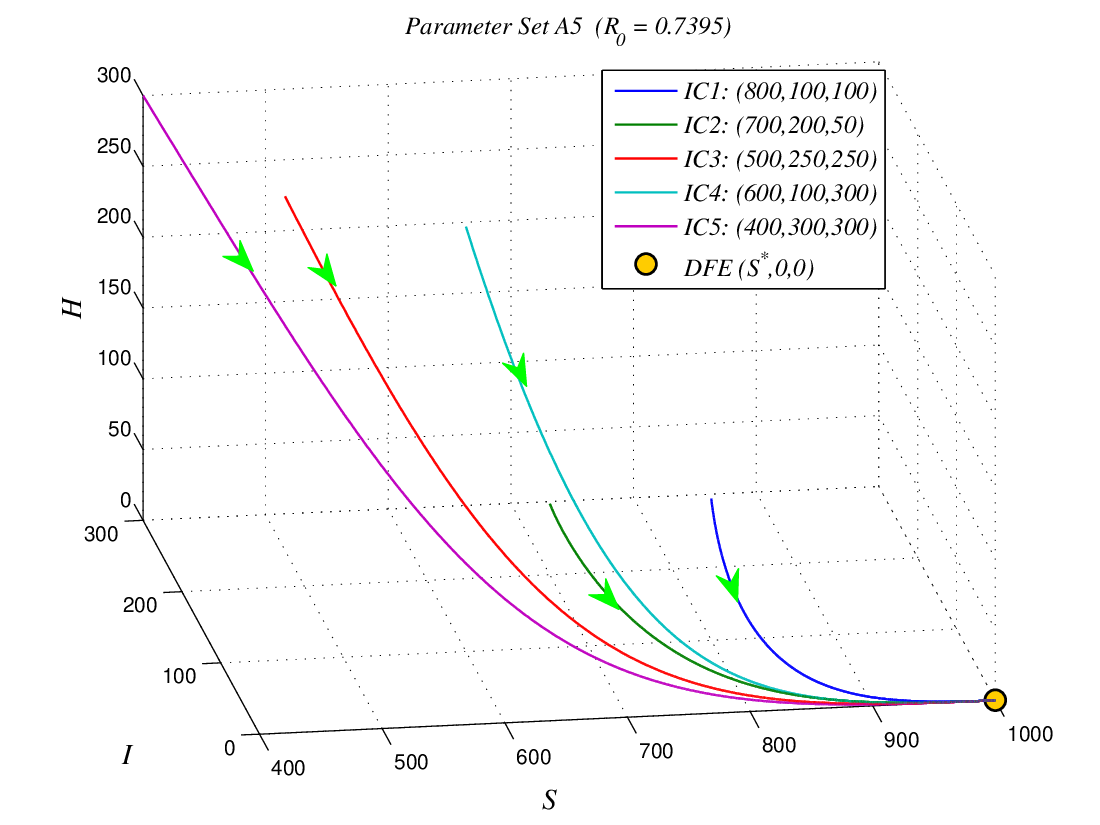}
\caption{Dynamics of \eqref{eq:1new} for the parameter set A5 in Table \ref{Table1}}\label{Fig:5}
\end{figure}
\subsection{Dynamics of the SISHD model when $\mathcal{R}_0 > 1$}
In this subsection, we investigate dynamical behaviour of the SISHD model \eqref{eq:2} under the condition $\mathcal{R}_0 > 1$. The model is evaluated using parameter sets and initial data given in Table \ref{Table3} and \ref{Table4}, respectively. The solutions generated by employing the RK4 method are shown in Figures \ref{Fig:6}-\ref{Fig:10}. It is important to remark that the numerical results suggest that the DEE point is not only locally asymptotically stable but also globally asymptotically stable. This supports the conjecture stated in Remark \ref{Remark1}.

\begin{table}[H]
\centering
\caption{Five parameter sets for which $\mathcal{R}_0 > 1$ and corresponding disease-endemic equilibria $(S^*, I^*, H^*)$}
\label{Table3}
\begin{tabular}{lccccccccccccc}
\hline
\textbf{Set} & $\Lambda$ & $\mu$ & $\beta$ & $\epsilon$ & $\alpha_I$ & $\gamma_I$ & $\delta$ & $\gamma_H$ & $\alpha_H$ & $\mathcal{R}_0$ & $S^*$ & $I^*$ & $H^*$ \\
\hline
$B_1$ & 20 & 0.02 & 0.000154 & 0.20 & 0.02 & 0.05 & 0.02 & 0.03 & 0.005 & 1.5 & 666.67 & 75.60 & 27.49 \\
$B_2$ & 20 & 0.02 & 0.000253 & 0.25 & 0.03 & 0.06 & 0.03 & 0.04 & 0.010 & 2.0 & 500.00 & 94.59 & 40.54 \\
$B_3$ & 30 & 0.03 & 0.000239 & 0.15 & 0.01 & 0.04 & 0.02 & 0.03 & 0.005 & 2.5 & 400.00 & 203.48 & 62.61 \\
$B_4$ & 10 & 0.01 & 0.000318 & 0.30 & 0.02 & 0.05 & 0.02 & 0.04 & 0.010 & 3.5 & 285.71 & 93.17 & 31.06 \\
$B_5$ & 25 & 0.025 & 0.000649 & 0.10 & 0.03 & 0.05 & 0.03 & 0.04 & 0.010 & 5.0 & 200.00 & 198.02 & 79.21 \\
\hline
\end{tabular}
\end{table}
\begin{table}[H]
\centering
\caption{Initial conditions $(S_0, I_0, H_0)$ used in numerical simulations}\label{Table4}
\begin{tabular}{cccccccccccccc}
\hline
\textbf{Initial data} & $S_0$ & $I_0$ & $H_0$ &\\
\hline
IC1 & 800 & 100 & 100 & \\
IC2 & 700 & 200 & 50  & \\
IC3 & 500 & 250 & 250 &\\
IC4 & 600 & 100 & 300 &\\
IC5 & 400 & 300 & 300 &\\
\hline
\end{tabular}
\end{table}
\begin{figure}[H]
\centering
\includegraphics[height=9.5cm,width=12cm]{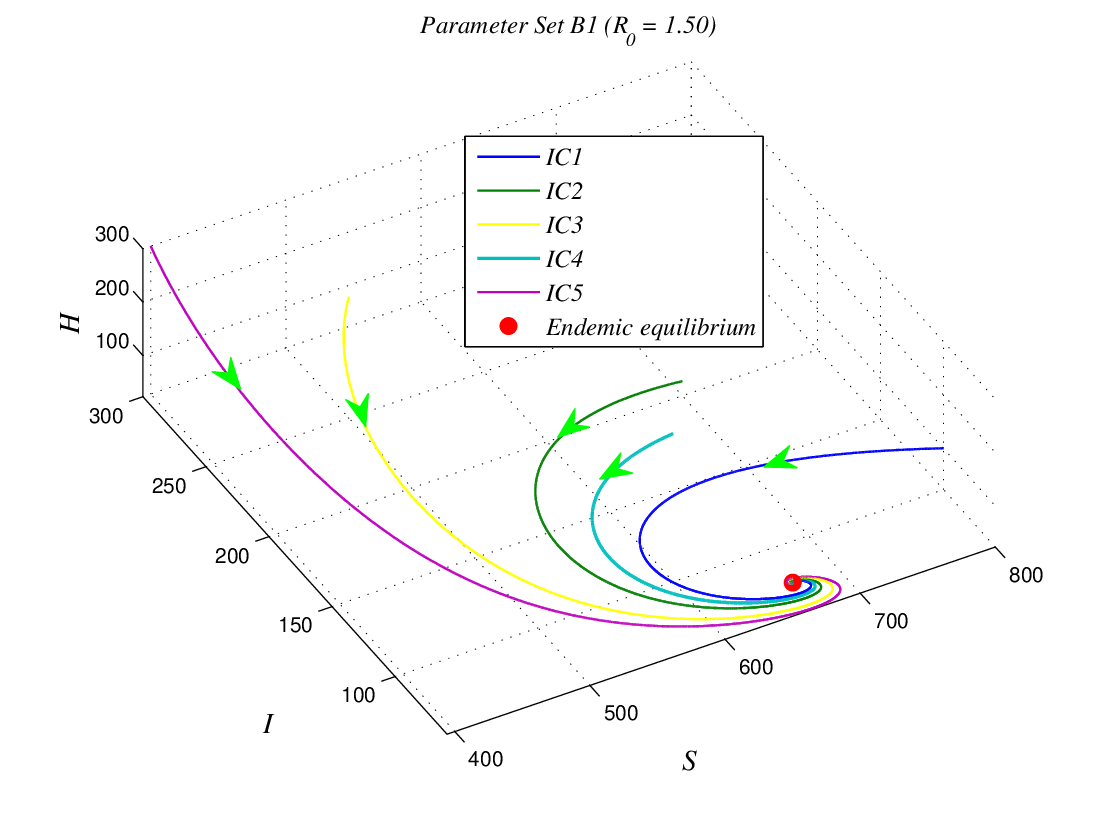}
\caption{Dynamics of \eqref{eq:1new} for the parameter set B1 in Table \ref{Table3}}\label{Fig:6}
\end{figure}
\begin{figure}[H]
\centering
\includegraphics[height=9.5cm,width=12cm]{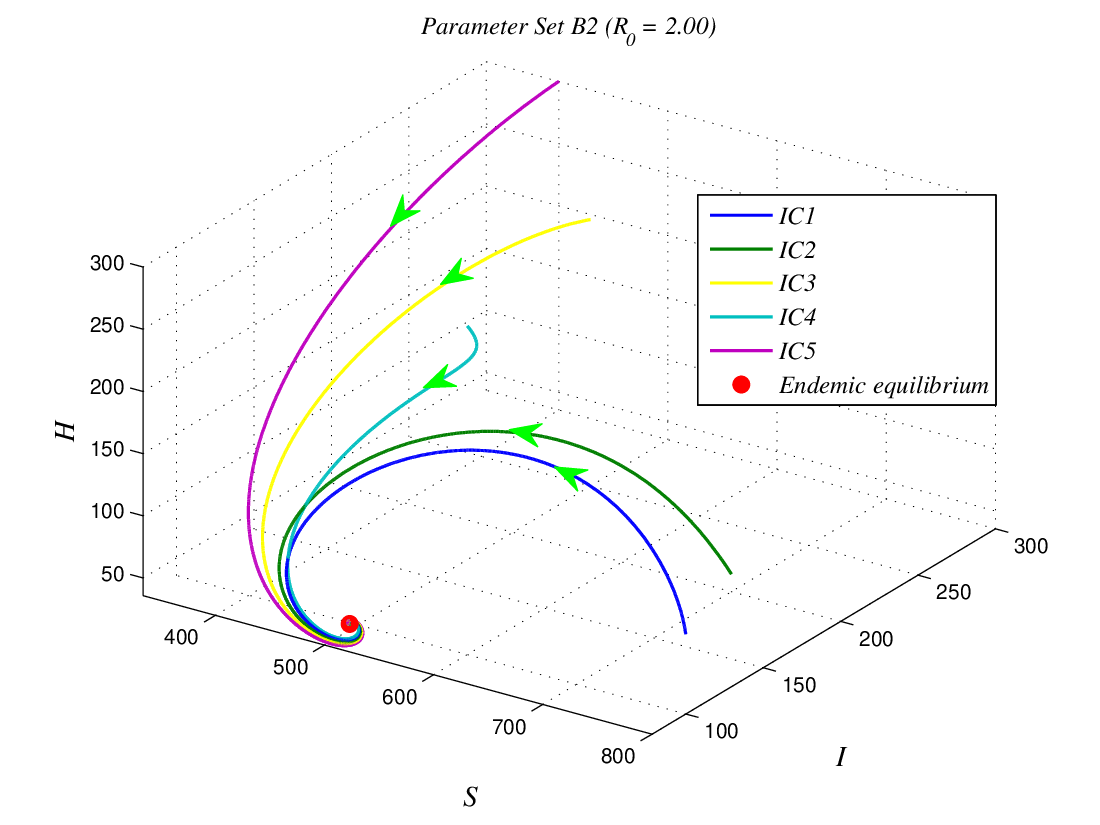}
\caption{Dynamics of \eqref{eq:1new} for the parameter set B2 in Table \ref{Table3}.}\label{Fig:7}
\end{figure}
\begin{figure}[H]
\centering
\includegraphics[height=9.5cm,width=12cm]{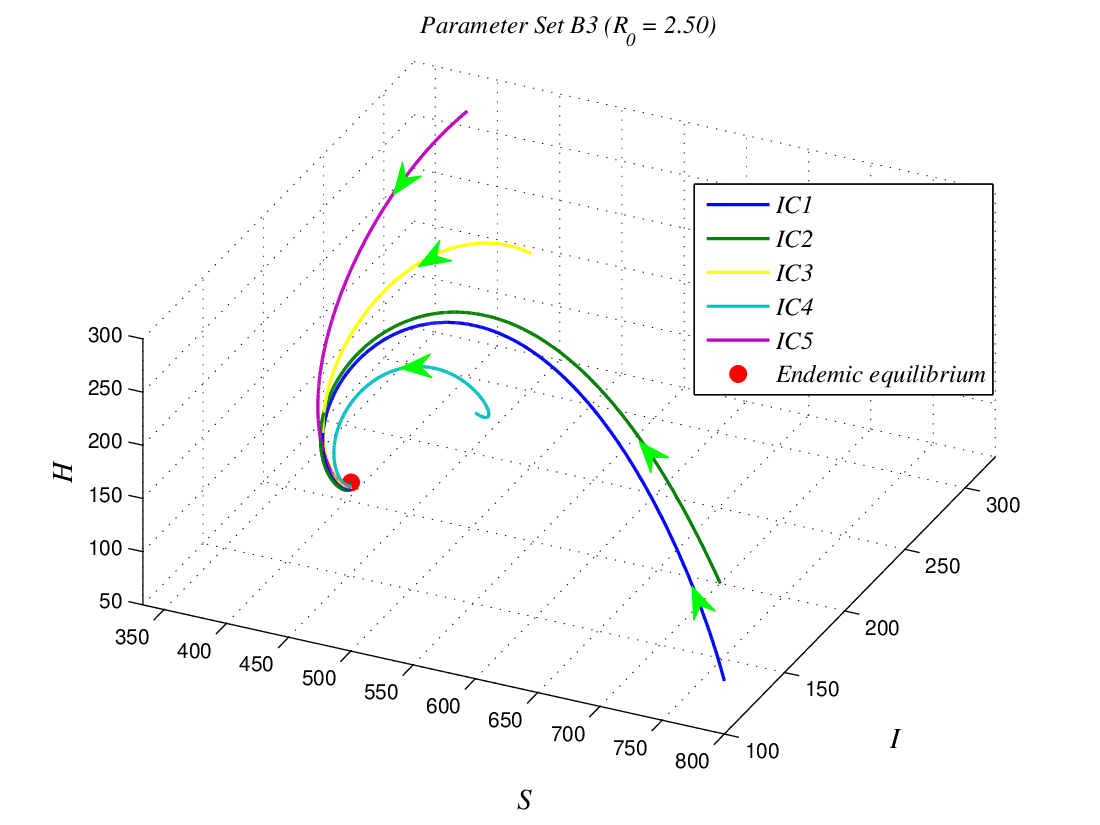}
\caption{Dynamics of \eqref{eq:1new} for the parameter set B3 in Table \ref{Table3}}\label{Fig:8}
\end{figure}
\begin{figure}[H]
\centering
\includegraphics[height=9.5cm,width=12cm]{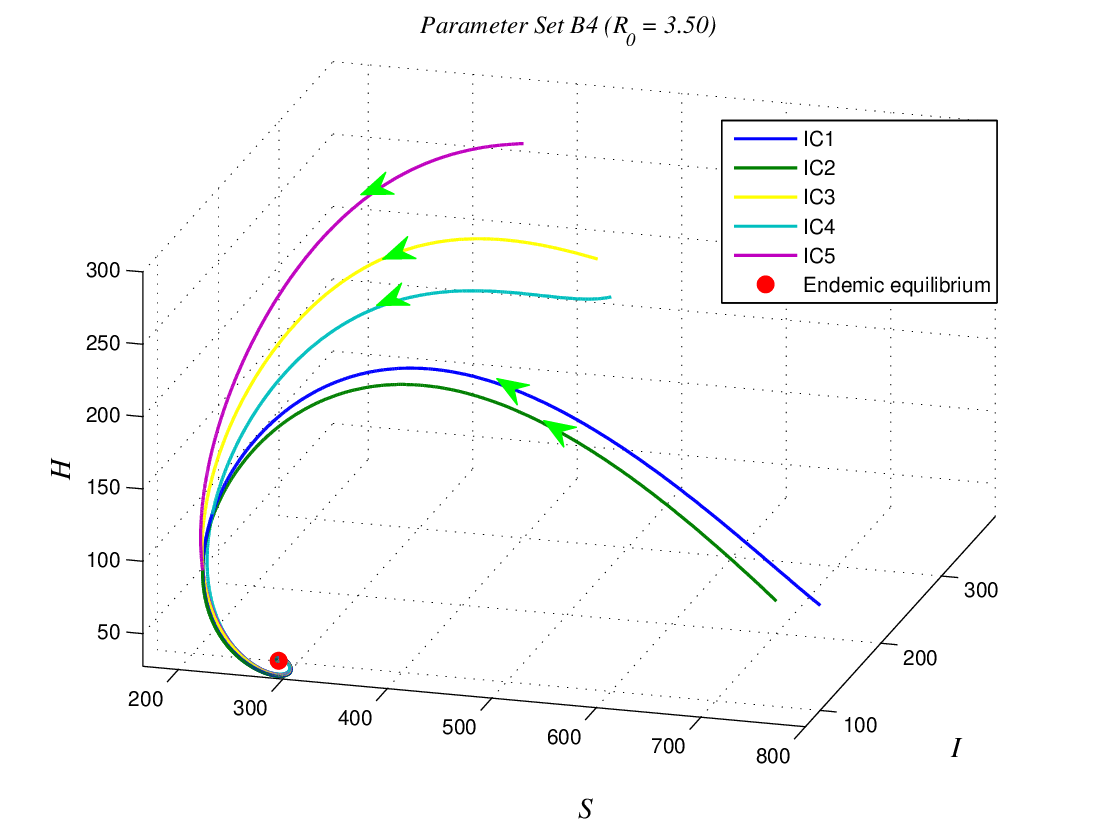}
\caption{Dynamics of \eqref{eq:1new} for the parameter set B4 in Table \ref{Table3}}\label{Fig:9}
\end{figure}
\begin{figure}[H]
\centering
\includegraphics[height=9.5cm,width=12cm]{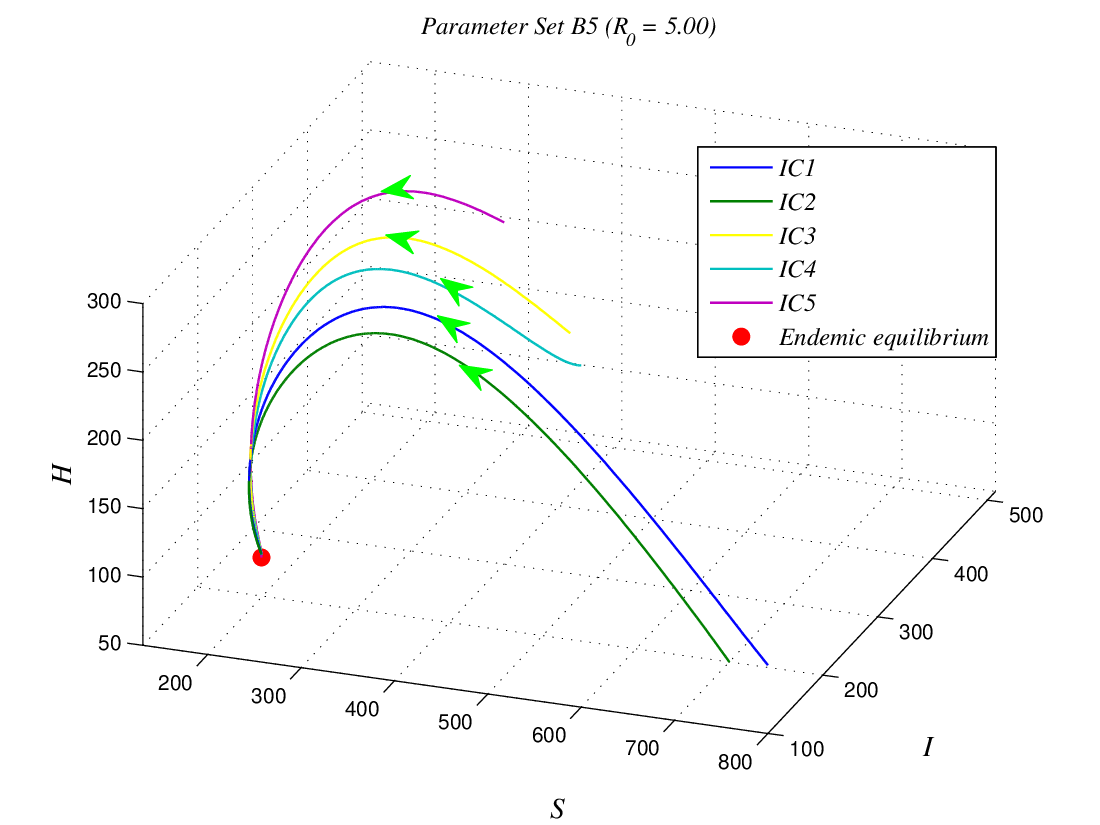}
\caption{Dynamics of \eqref{eq:1new} for the parameter set B5 in Table \ref{Table3}}\label{Fig:10}
\end{figure}
Before ending this subsection, we compute sensitivity indices  of the basic reproduction number $\mathcal{R}_0$ with respect to controllable parameters in \eqref{eq:1new}. We recall from \cite{Chitnis} that: The normalized forward sensitivity index of a variable, $u$, that depends differentiably on a parameter, $p$, is defined as:
\begin{equation*}
\Upsilon_p^u:=\frac{\partial u}{\partial p} \times \frac{p}{u}.
\end{equation*}%
The computed sensitivity indices using the parameter set B4 are shown in Figure \ref{Fig:11}. They indicate how strongly each parameter affects the disease transmission dynamics, which in turn helps formulate effective prevention and control strategies.
\begin{figure}[H]
\centering
\includegraphics[height=9.5cm,width=15cm]{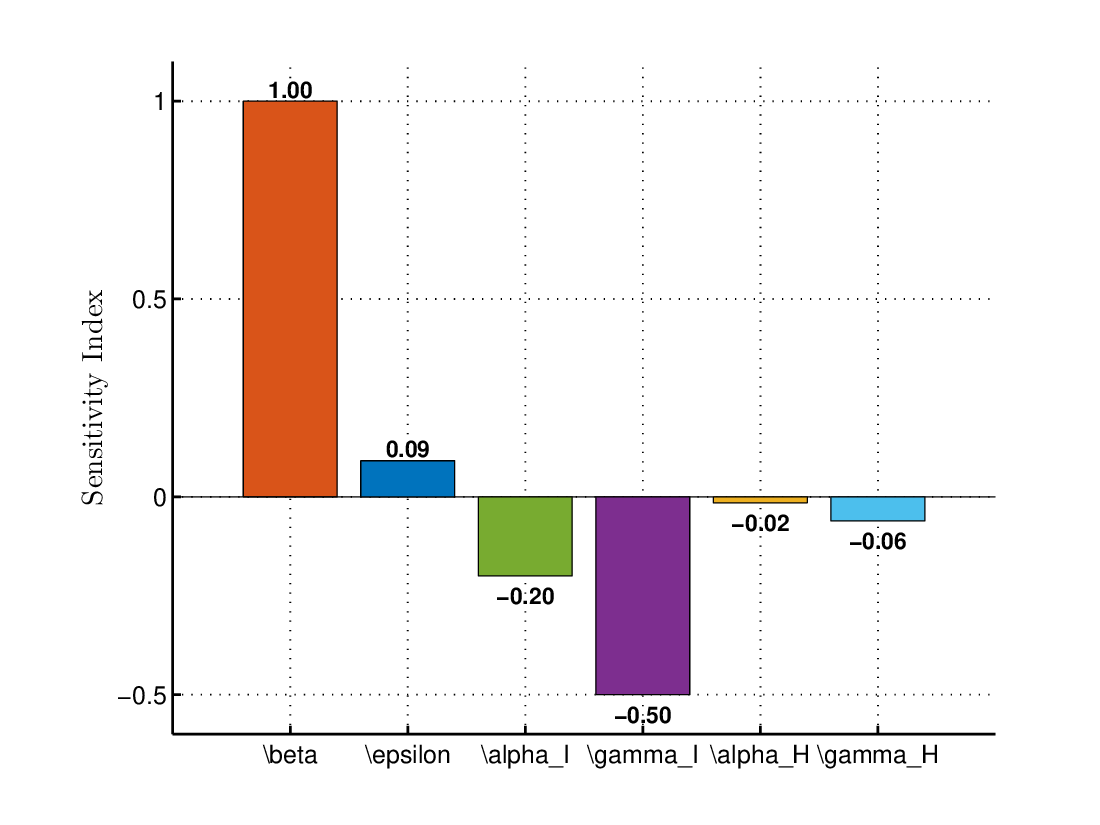}
\caption{The sensitivity indices evaluated by using the parameter set B4}\label{Fig:11}
\end{figure}
\subsection{An Example in Health Insurance Pricing}
In this section, we consider an example that demonstrates the application of the proposed model in insurance premium determination. First, we take the values of the benefit amounts in \eqref{eq:Pi} as follows
\begin{equation*}
b_I = 1.0, \quad b_H = 20, \quad d = 100.
\end{equation*}
Then, the premium rate $\pi$ defined in \eqref{eq:Pi1}, which represents the zero-profit condition, are computed as in Table \ref{Pi}.
\begin{table}[H]
\centering
\caption{The values of $\pi$ for each parameter set and initial condition in Tables \ref{Table3} and \ref{Table4}.}\label{Pi}
\begin{tabular}{cccccccccccccccc}
\hline
Parameter set &Initial data& $S_0$ & $I_0$ & $H_0$ & $\pi$ \\
\hline
B1 & IC1 & 800 & 100 & 100 & 1.93896 \\
B1 & IC2 & 700 & 200 & 50 & 1.91942 \\
B1 & IC3 & 500 & 250 & 250 & 2.26632 \\
B1 & IC4 & 600 & 100 & 300 & 2.24263 \\
B1 & IC5 & 400 & 300 & 300 & 2.36076 \\
B2 & IC1 & 800 & 100 & 100 & 3.70673 \\
B2 & IC2 & 700 & 200 & 50 & 3.67883 \\
B2 & IC3 & 500 & 250 & 250 & 4.04164 \\
B2 & IC4 & 600 & 100 & 300 & 4.01615 \\
B2 & IC5 & 400 & 300 & 300 & 4.13672 \\
B3 & IC1 & 800 & 100 & 100 & 6.33601 \\
B3 & IC2 & 700 & 200 & 50 & 6.38258 \\
B3 & IC3 & 500 & 250 & 250 & 6.85861 \\
B3 & IC4 & 600 & 100 & 300 & 6.73363 \\
B3 & IC5 & 400 & 300 & 300 & 6.99956 \\
B4 & IC1 & 800 & 100 & 100 & 5.83294 \\
B4 & IC2 & 700 & 200 & 50 & 5.7334 \\
B4 & IC3 & 500 & 250 & 250 & 6.4421 \\
B4 & IC4 & 600 & 100 & 300 & 6.43625 \\
B4 & IC5 & 400 & 300 & 300 & 6.62139 \\
B5 & IC1 & 800 & 100 & 100 & 16.3427 \\
B5 & IC2 & 700 & 200 & 50 & 16.3699 \\
B5 & IC3 & 500 & 250 & 250 & 17.1336 \\
B5 & IC4 & 600 & 100 & 300 & 16.9398 \\
B5 & IC5 & 400 & 300 & 300 & 17.3489 \\
\hline
\end{tabular}
\end{table}
For parameter set B2 and initial data IC1, we determine that the value of $\pi^*$, as defined in \ref{eq:Pi3}, is equal to $5.67475$. This value guarantees that the discounted value of future premiums remains sufficient to cover future liabilities. The graphs of the reserve level $V(t)$ in \eqref{Pi2} corresponding to $\pi = \pi^*$, $\pi = 90\% \pi^*$ and $\pi = 110\% \pi^*$ are depicted in Figures~\ref{Fig:12}-\ref{Fig:15}. These results support the theoretical analyses presented in Section \ref{Sec3}.
\begin{figure}[H]
\centering
\includegraphics[height=9.0cm,width=12cm]{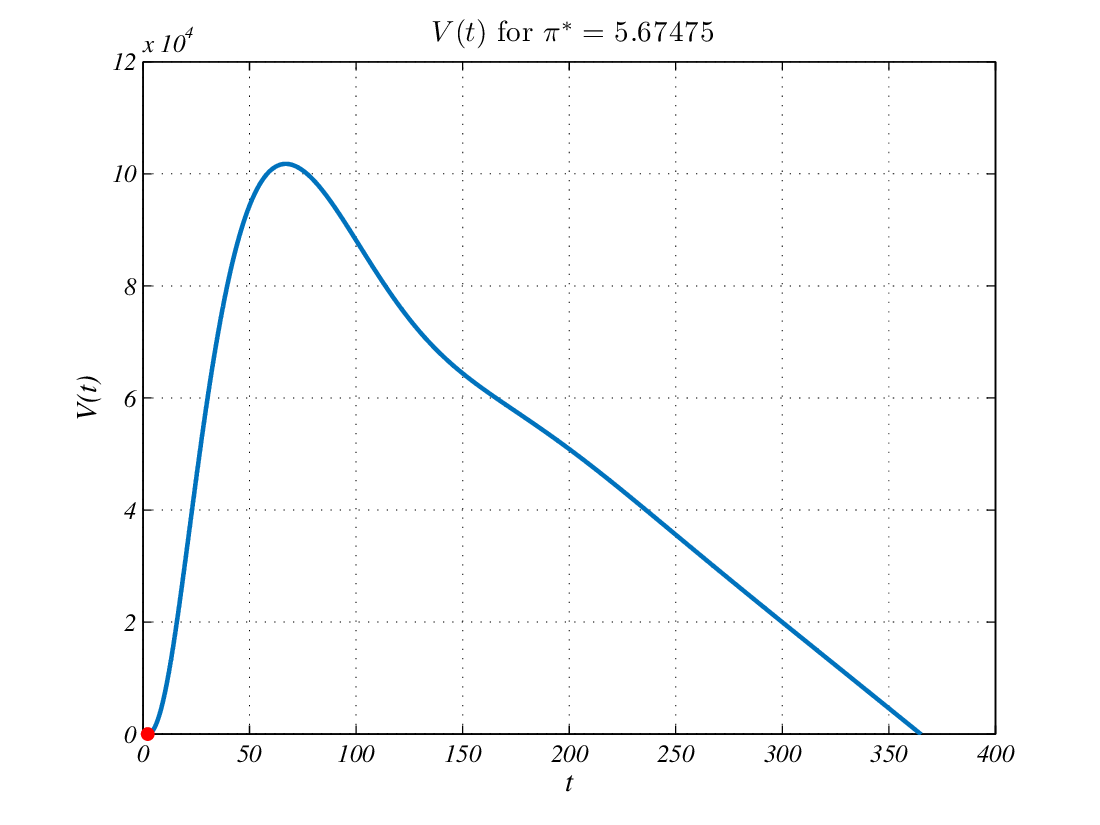}
\caption{The reserve level $V(t)$ with $\pi = \pi^*$}\label{Fig:12}
\end{figure}
\begin{figure}[H]
\centering
\includegraphics[height=9.0cm,width=12cm]{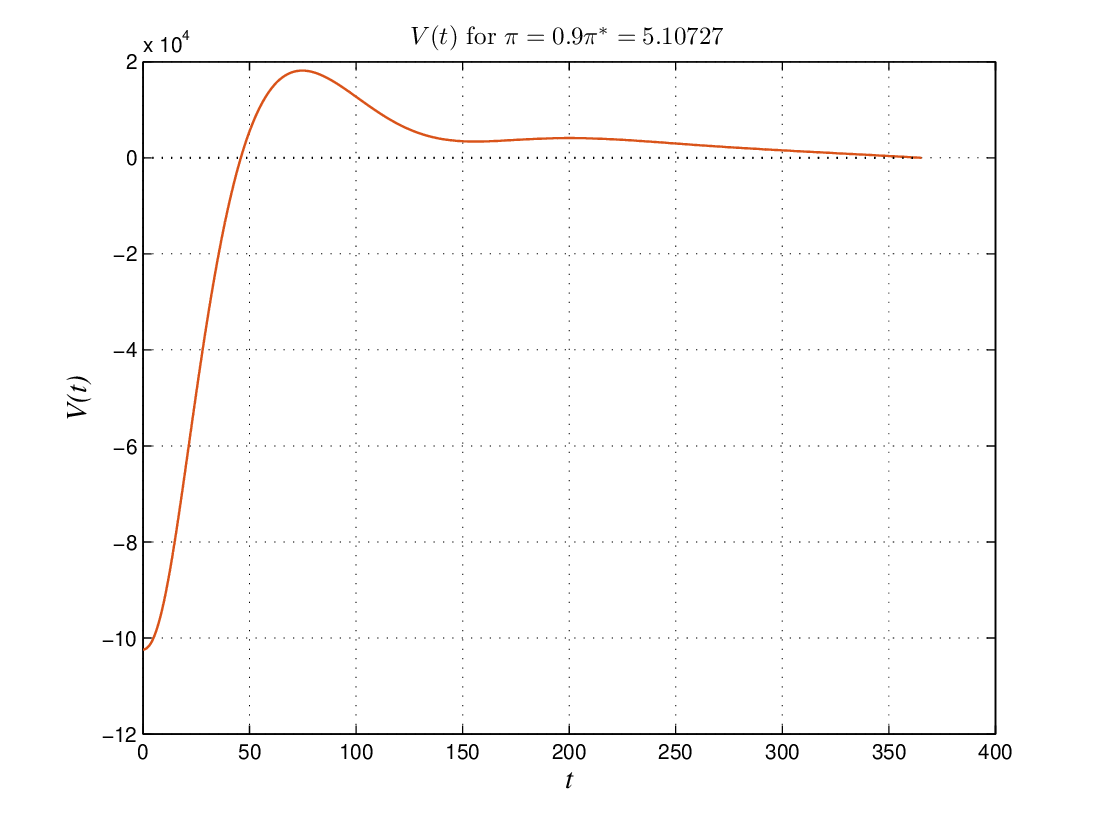}
\caption{The reserve level $V(t)$ with $\pi = 90\%\pi^*$}\label{Fig:13}
\end{figure}
\begin{figure}[H]
\centering
\includegraphics[height=9.0cm,width=12cm]{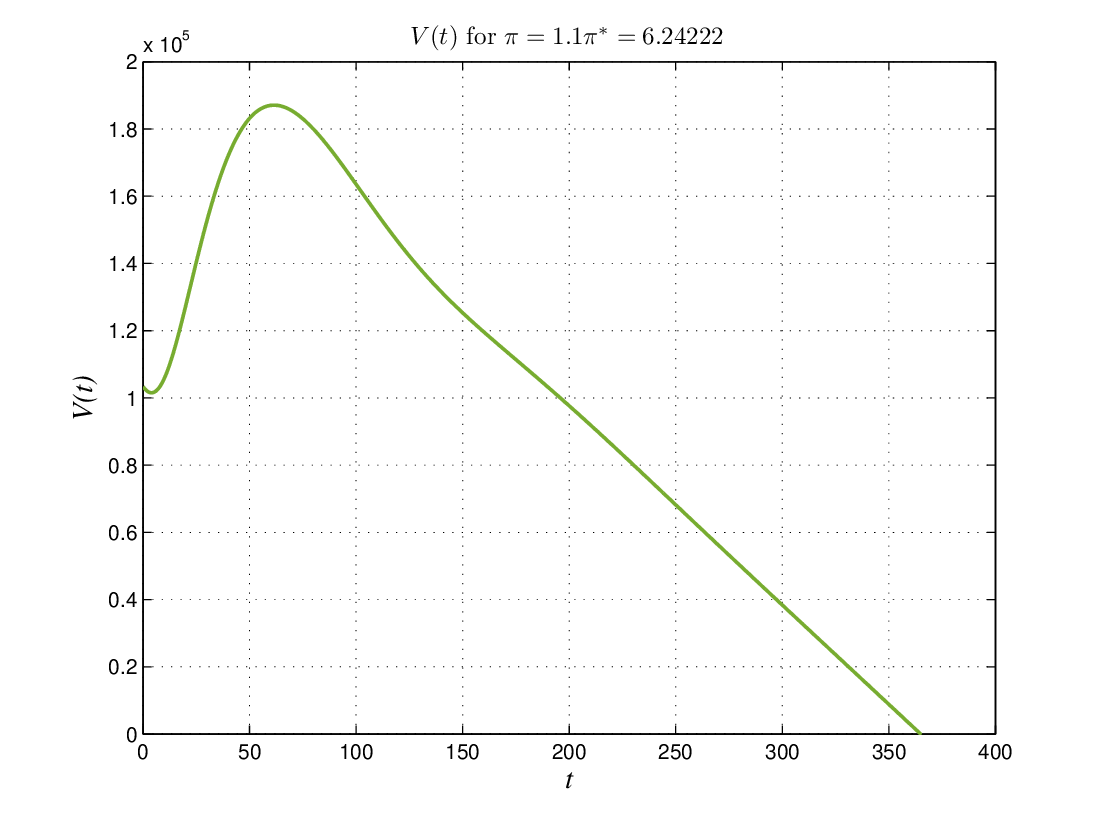}
\caption{The reserve level $V(t)$ with $\pi = 110\%\pi^*$}\label{Fig:14}
\end{figure}
\begin{figure}[H]
\centering
\includegraphics[height=9.0cm,width=12cm]{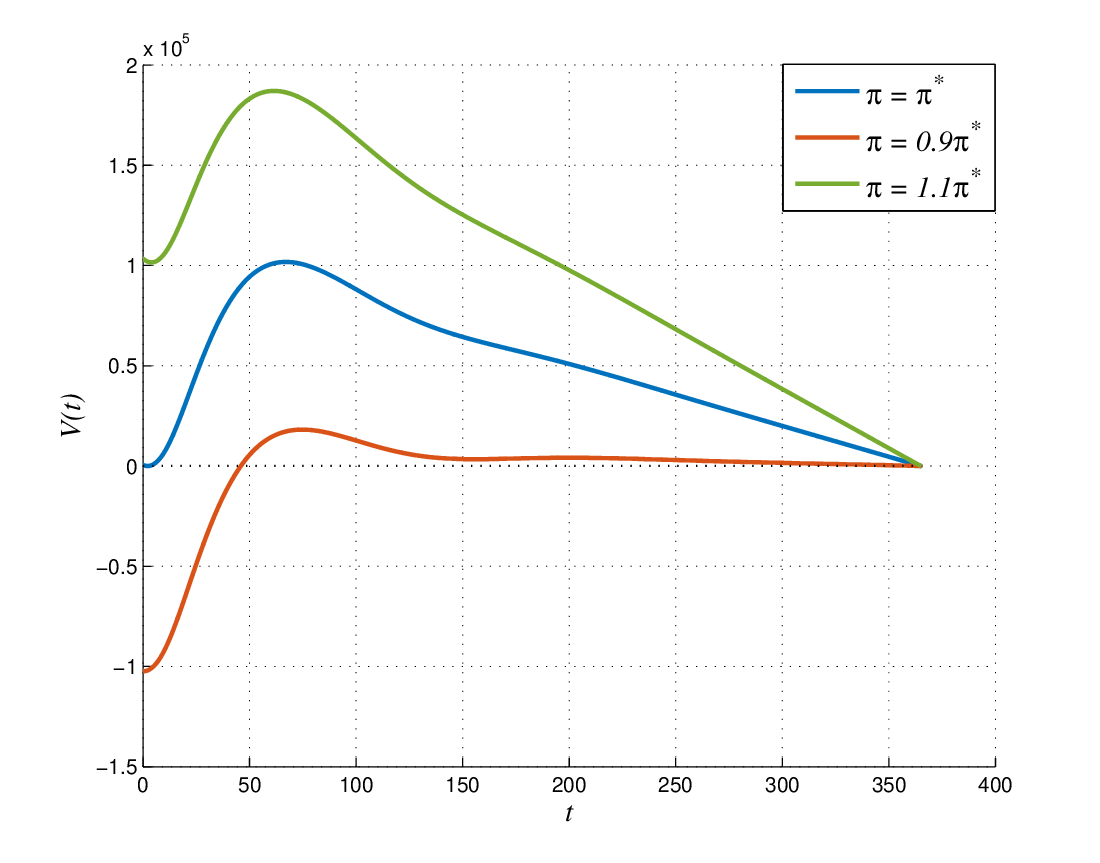}
\caption{the reserve level $V(t)$ in \eqref{Pi2} corresponding to $\pi = \pi^*$, $\pi = 90\% \pi^*$ and $\pi = 110\% \pi^*$.}\label{Fig:15}
\end{figure}
\section{Conclusions and Discussions}\label{Sec5}
In this study, we have investigated a modified version of the classical SIS model by incorporating hospitalization for treatment and disease-induced mortality, resulting in a so-called SISHD model. The dynamical analysis has demonstrated the positivity and boundedness of solutions, the basic reproduction number, as well as the existence and asymptotic stability of the DFE and DEE points. An application of the proposed model to health insurance pricing has been also presented, where the population dynamics derived from the SISHD model are used to estimate insurance costs. Numerical simulations have confirmed the theoretical results and illustrated the model’s potential applicability in the assessment and management of health insurance systems.

Future work may extend this framework by considering stochastic effects or age-structured populations to enhance its practical relevance. In particular, In particular, the following issues will be of special interest:
\begin{itemize}
\item the problem of parameter estimation from real-world data;
\item the consideration of factors affecting the disease transmission process, such as seasonal transmission and stochastic effects;
\item developing data-driven models to better capture real-world dynamics.
\end{itemize}
\section*{Acknowledgments}
\noindent
The co-first and last authors wish to thank the Vietnam Institute for Advanced Study in Mathematics (VIASM) for its financial support and the excellent working conditions. This work was completed while they were working at the VIASM.\\
We express our  gratitude to Dr. Oluwaseun F. Egbelowo (Merck \& Co., Inc.) for  the insightful discussions on the mathematical modeling and analysis of infectious diseases. We also thank Thai Tuan Dinh (VNU University of Science) for helpful discussions during the preparation of the manuscript. 
\section*{CRediT authorship contribution statement}
\noindent
\textbf{Tien Thinh Le} and \textbf{Tuan Chau Do}: Writing – review \& editing, Visualization, Validation, Software, Resources, Investigation, Formal analysis, Data curation.\\
\textbf{Nguyen Trong Hieu} and \textbf{Manh Tuan Hoang:} Writing – review \& editing, Writing – original draft, Visualization, Validation, Supervision, Software, Resources, Project administration, Methodology, Investigation, Formal analysis, Data curation, Conceptualization, Funding acquisition.\\
\textbf{Tien Thinh Le} and  \textbf{Tuan Chau Do} are contributed equally to this work
as co-first authors.
\section*{Availability of data and material}
The authors confirm that the data supporting the findings of this study are available within the article [and/or] its supplementary materials.
\section*{Competing interests}
We declare that there are no conflicts of interest with regards to the publication of this paper.

\bibliographystyle{amsalpha}

\end{document}